\newtheorem{theorem}{Theorem}
\newtheorem{lemma}[theorem]{Lemma}
\newtheorem{proposition}[theorem]{Proposition}
\newtheorem{claim}{Claim}
\newcommand{\Continue}{\textbf{continue}}
\newcommand{\bdots}{..}
\newcommand\LPh{(LP$_h$)}
\newcommand\LPl{(LP$_\ell$)}
\newcommand\LPpl{(LP$_\ell^\prime$)}
\newcommand\Al{(LD$_\ell$)}
\newcommand\dd{{\mathrm d}}
\newcommand\rk{\mathrm{rk}}
\newcommand\NN{{\mathbb N}}
\newcommand\ZZ{{\mathbb Z}}
\newcommand\RR{{\mathbb R}}
\newcommand\TT{{\mathbb T}}
\begin{document}
\title{Curves on the torus with few intersections\thanks{
   The first, third and fourth authors have been supported by the MUNI Award in Science and Humanities (MUNI/I/1677/2018) of the Grant Agency of Masaryk University and by the project GA24-11098S of the Czech Science Foundation.
   The first and fourth authors also acknowledge support by the National Science Foundation under Grant No. DMS-1928930 while the authors were in residence at the Simons Laufer Mathematical Sciences Institute (formerly MSRI) in Berkeley, California, during the Spring 2025 semester.
   The work of the second author was supported by the OP JAK Project (MSCAFellow5\_MUNI (CZ.02.01.01/00/22\_010/0003229).
   The work presented in this manuscript appeared in the form of an extended abstract in the proceedings of Eurocomb'25.}}
\author{Igor Balla\thanks{Simons Laufer Mathematical Sciences Institute, 17 Gauss Way, Berkeley, CA 94720. E-mail: {\tt iballa1990@gmail.com}.}\and
        Marek Filakovsk\'y\thanks{Faculty of Informatics, Masaryk University, Botanick\'a 68A, 602 00 Brno, Czech Republic. E-mail: {\tt filakovsky@fi.muni.cz}.}\and
	Bart\l{}omiej Kielak\thanks{Institute of Mathematics, Leipzig University, Augustusplatz 10, 04109 Leipzig, Germany. Previous affiliation: Faculty of Informatics, Masaryk University, Botanick\'a 68A, 602 00 Brno, Czech Republic.}\and
        Daniel Kr{\'a}l'\thanks{Institute of Mathematics, Leipzig University, Augustusplatz 10, 04109 Leipzig, and Max Planck Institute for Mathematics in the Sciences, Inselstra{\ss}e 22, 04103 Leipzig, Germany. E-mail: {\tt daniel.kral@uni-leipzig.de}. Previous affiliation: Faculty of Informatics, Masaryk University, Botanick\'a 68A, 602 00 Brno, Czech Republic.}\and
	Niklas Schlomberg\thanks{Research Institute for Discrete Mathematics and Hausdorff Center for Mathematics, University of Bonn. E-mail: {\tt schlomberg@or.uni-bonn.de}}
	}

\date{}

\maketitle

\begin{abstract}
Aougab and Gaster [Math. Proc. Cambridge Philos. Soc. 174 (2023), 569--584]
proved that any set of simple closed curves on the torus,
where any two are non-homotopic and intersect at most $k$ times,
has a maximum size of $k+O(\sqrt{k}\log k)$.
We determine the maximum size of such a set for every $k$.
In particular, the maximum never exceeds $k+6$, and
it does not exceed $k+4$ when $k$ is large.

As this quantity coincides with the maximal number of columns of a generic $k$-modular matrix with two rows,
our result also settles the column number problem, a problem of interest in combinatorial optimization, for such matrices.
\end{abstract}

\section{Introduction}
\label{sec:intro}

We study families of simple closed homotopy-non-equivalent curves on a compact surface such that
any two have at most $k$ intersections. 
Despite its fundamental and elementary nature,
determining the maximum size of such families has remained an open problem, even in the case of a torus.
In this paper, we completely resolve the case of the torus,
i.e., we determine the maximum size of such a family for every $k\in\NN$.
Our result also completely settles the column number problem for generic matrices with two rows,
which is a problem concerning $\Delta$-modular matrices studied in combinatorial optimization in relation to integer programming.
This case of the column number problem
has recently been solved for sufficiently large $\Delta$ by Kriepke and Schymura~\cite{KriS25}, independently of our work;
we give further details in Subsection~\ref{subsec:IP} and also refer to the blog post by Ellenberg~\cite{Ell25-blog}.

Formally,
a \emph{$k$-system} on a surface $\Sigma$
is defined to be a collection of simple closed curves on $\Sigma$ such that
any two are non-homotopic and intersect at most $k$ times.
Determining the maximum size of a $k$-system on $\Sigma$,
which is denoted by $N(\Sigma, k)$,
has been the subject of an intensive line of research
for various surfaces $\Sigma$ and values $k$~\cite{Aou14,Aou17,Aou18,AouBG19,Gre18,Gre19,JuvMM96,MalRT14,Prz15},
also see~\cite{PacTT22} for results concerning a punctured plane.
We remark that, as discussed in~\cite{Gre19,Prz15},
particularly the case $k=1$ enjoys having interesting relations including those
to the systolic curve complex~\cite{Sch00} and Dehn surgery~\cite{BakGL15}.
A priori, it is not clear whether $N(\Sigma,k)$ is even finite;
to this end, Juvan, Malni\v c and Mohar~\cite{JuvMM96} showed that $N(\Sigma,k)$
is finite for every compact surface $\Sigma$ and every $k\in\NN$.
When $\Sigma$ is the closed orientable surface of genus $g$,
Greene~\cite{Gre19} showed that $N(\Sigma,k)\le O(g^{k+1}\log g)$ for any fixed $k\in\NN$,
improving Przytycki's bound from~\cite{Prz15}.
In this work, we focus on the case when $\Sigma$ is the torus $\TT^2$;
this arguably simplest case turns out to have surprising connections to number theory, which are presented below.

Juvan, Malni\v c and Mohar~\cite{JuvMM96} showed that $k + 1 \leq N(\TT^2, k) \leq 2k + 3$ and
noted that the upper bound can be improved to $\frac{3}{2}k + O(1)$.
The connection between this problem and number theory was pointed out by Agol~\cite{Ago00},
who observed that the size of a $k$-system on the torus is at most one more than the smallest prime larger than $k$.
This implies that $N(\TT^2,k)$ is at most $(1+o(1))k$ and specifically,
using the bound on the size of prime gaps by Baker, Harman and Pintz~\cite{BakHP01},
$N(\TT^2,k)$ is at most $k+O(k^{21/40})$.
Cram\'er~\cite{Cra21} showed that a positive resolution of the Riemann hypothesis
would yield a bound on prime gaps implying that $N(\TT^2, k)$ is at most $k+O(\sqrt{k}\log k)$ and
formulated a stronger number-theoretic conjecture that would imply an upper bound of $k+O(\log^2 k)$;
we refer to~\cite{Gra95,OliHP14} for further discussion
including the suspicion that Cram\'er's error term should actually be $O(\log^{2+\varepsilon} k)$.

Very recently,
Aougab and Gaster~\cite{AouG23} used combinatorial and geometric arguments
in conjunction with estimates from analytic number theory
to show that $N(\TT^2, k)$, i.e., the maximum size of a $k$-system on the torus,
is at most $k+O(\sqrt{k}\log k)$ (note that this matches Cram\'er's bound,
which is conditioned on a positive resolution of the Riemann hypothesis).

Our main result determines the maximum size of a $k$-system for every $k\in\NN$.
Aougab and Gaster also noted that
they are not aware of any $k$-system on the torus whose size exceeds $k+6$, and
our main result shows that there is indeed no $k$-system whose size exceeds $k+6$.

\begin{theorem}
\label{thm:main}
Let $K_0$ be the set containing the $59$ integers listed in Table~\ref{tab:main}.
For every $k\in\NN\setminus K _0$,
it holds that
\[
N(\TT^2,k) = \begin{cases}
k+4 &\text{ if } k \bmod 6 = 2,\\
k+3 &\text{ if } k \bmod 6\in\{1,3,5\}, \mbox{and}\\
k+2 &\text{ otherwise}.
\end{cases}
\]
The values of $N(\TT^2,k)$ for $k\in K_0$ are given in Table~\ref{tab:main}.
\end{theorem}

\noindent Note that there are only four values of $k$ such that $N(\TT^2,k)=k+6$, namely $k\in\{24,48,120,168\}$, and
only $13$ values such that $N(\TT^2,k)=k+5$.

\begin{table}
\begin{center}
\begin{tabular}{|c|cccccccccc|}
\hline
$k$ & 1 & 2 & 19 & 23 & 24 & 25 & 33 & 34 & 37 & 47 \\
\hline
$N(\TT^2,k)$ & 3 & 4 & 23 & 27 & 30 & 30 & 37 & 38 & 42 & 51 \\
$N(\TT^2,k)-k$ & +2 & +2 & +4 & +4 & +6 & +5 & +4 & +4 & +5 & +4 \\
``pattern'' & +3 & +4 & +3 & +3 & +2 & +3 & +3 & +2 & +3 & +3 \\
\hline
\hline
$k$ & 48 & 49 & 53 & 54 & 55 & 61 & 62 & 63 & 64 & 76 \\
\hline
$N(\TT^2,k)$ & 54 & 54 & 57 & 59 & 60 & 65 & 67 & 67 & 68 & 80 \\
$N(\TT^2,k)-k$ & +6 & +5 & +4 & +5 & +5 & +4 & +5 & +4 & +4 & +4 \\
``pattern'' & +2 & +3 & +3 & +2 & +3 & +3 & +4 & +3 & +2 & +2 \\
\hline
\hline
$k$ & 83 & 84 & 85 & 89 & 90 & 94 & 113 & 114 & 115 & 118 \\
\hline
$N(\TT^2,k)$ & 87 & 89 & 89 & 93 & 94 & 98 & 117 & 119 & 119 & 122 \\
$N(\TT^2,k)-k$ & +4 & +5 & +4 & +4 & +4 & +4 & +4 & +5 & +4 & +4 \\
``pattern'' & +3 & +2 & +3 & +3 & +2 & +2 & +3 & +2 & +3 & +2 \\
\hline
\hline
$k$ & 119 & 120 & 121 & 124 & 127 & 139 & 141 & 142 & 143 & 144 \\
\hline
$N(\TT^2,k)$ & 123 & 126 & 126 & 128 & 132 & 143 & 145 & 147 & 147 & 149 \\
$N(\TT^2,k)-k$ & +4 & +6 & +5 & +4 & +5 & +4 & +4 & +5 & +4 & +5 \\
``pattern'' & +3 & +2 & +3 & +2 & +3 & +3 & +3 & +2 & +3 & +2 \\
\hline
\hline
$k$ & 145 & 154 & 167 & 168 & 169 & 174 & 184 & 204 & 208 & 214 \\
\hline
$N(\TT^2,k)$ & 149 & 158 & 171 & 174 & 174 & 178 & 188 & 208 & 212 & 217 \\
$N(\TT^2,k)-k$ & +4 & +4 & +4 & +6 & +5 & +4 & +4 & +4 & +4 & +3 \\
``pattern'' & +3 & +2 & +3 & +2 & +3 & +2 & +2 & +2 & +2 & +2 \\
\hline
\hline
$k$ & 234 & 244 & 264 & 274 & 294 & 304 & 324 & 354 & 384 & \\
\hline
$N(\TT^2,k)$ & 238 & 247 & 268 & 277 & 297 & 307 & 327 & 357 & 387 & \\
$N(\TT^2,k)-k$ & +4 & +3 & +4 & +3 & +3 & +3 & +3 & +3 & +3 & \\
``pattern'' & +2 & +2 & +2 & +2 & +2 & +2 & +2 & +2 & +2 & \\
\hline
\end{tabular}
\end{center}
\caption{The values of $N(\TT^2,k)$ for $k\in K_0$.
         The values ``pattern'' are the additive constants based on $k\bmod 6$ given as in Lemma~\ref{lm:height123},
         which determines the values of $N(\TT^2,k)$ for all sufficiently large $k$.}
\label{tab:main}
\end{table}

Since the proof of Theorem~\ref{thm:main} is computer assisted,
we also give a weaker version of the theorem,
which stated as Theorem~\ref{thm:main2} and which can be proven \emph{without computer assistance}.
In particular,
Theorem~\ref{thm:main2} asserts that $N(\TT^2,k)\le k+4$ for every sufficiently large $k\in\NN$,
which implies that $N(\TT^2,k)\le k+O(1)$ for all $k$.

\subsection{Column number problem}
\label{subsec:IP}

Integer programming is one of the most fundamental problems in combinatorial optimization.
An \emph{integer program} asks for determining the maximum value of $c^\top x$ subject to $Ax\le b$ and $x\in\ZZ^n$
where $A\in\ZZ^{m\times n}$, $b\in\ZZ^m$ and $c\in\ZZ^n$.
Integer programming is very hard from the computational complexity point of view:
it appeared among the 21 problems shown to be NP-complete in the original paper on NP-completeness by Karp~\cite{Kar72} and
it is known to remain NP-complete even when the entries of the constraint matrix $A$ are zero and one only.
A prominent tractable case is when the constraint matrix $A$ is totally unimodular,
i.e., all determinants of its square submatrices are equal to $0$ or $\pm 1$.
In this case, all vertices of the polyhedron $Ax\le b$, $x\in\RR^n$ are integral and
so linear programming algorithms can be applied.

An integer matrix $A$ is \emph{$\Delta$-modular}
if the absolute value of the determinant of every full-rank square matrix is at most $\Delta$;
we remark that such matrices are sometimes called $\Delta$-submodular
while the term $\Delta$-modular is reserved to be used for those matrices that
contain a full rank square submatrix with the determinant equal $\pm\Delta$.
It is conjectured that
integer programs with $\Delta$-modular constraint matrices can be efficiently solved~\cite{GriMPV18,She97}, and
efficient algorithms exist for integer programs with generic $\Delta$-modular constraint matrices~\cite{ArtEGOVW16},
see also~\cite{JiaB22};
a matrix $A$ is \emph{generic} if any $\rk(A)$ columns are linearly independent.
We also refer to~\cite{ArtWZ17,BonSEHN14,FioJWY22}
for additional results on integer programs with $\Delta$-modular constraint matrices.

The \emph{column number problem} asks for determining the maximum number of columns of a generic integer $\Delta$-modular matrix;
we refer to~\cite{ArtEGOVW16,ArtWZ17,AveS24,KriKS25,OxlW22,PaaSWX24} for results on this problem and
its variant when the matrix is assumed to be \emph{simple}, i.e., not containing a zero column or parallel columns, instead of generic.
In particular, Oxley and Walsh determined the maximal number of columns of simple $2$-modular matrices.
Our work is directly related to the column number problem for matrices with two rows:
the vectors of a $\Delta$-nice set as defined in Subsection~\ref{subsec:overview}
form columns of a two-row integer matrix that is generic and $\Delta$-modular, and
vice versa, if an integer matrix with two rows is generic and $\Delta$-modular,
then its columns, each after dividing by the greatest common divisor of its entries (to make the entries coprime),
form a $\Delta$-nice set.
Since $k$-nice sets describe the homotopy classes of curves contained in a $k$-system,
we conclude that $N(\TT^2,\Delta)$
is equal to the maximal number of columns of a generic integer $\Delta$-modular matrix with two rows.
Kriepke and Schymura~\cite{KriS25} have recently determined the maximal number of columns of a generic integer $\Delta$-modular matrix 
when $\Delta\le 1\,550$ and $\Delta\ge 10^8$,
which determines $N(\TT^2,\Delta)$ for these values of $\Delta$;
their work is independent of our earlier work yielding analogous results~\cite{BalFKKS24v1}.
The main result of the current paper determines $N(\TT^2,\Delta)$ for all values of $\Delta$.

\subsection{Overview of the proof}
\label{subsec:overview}

We now provide a brief overview of the proof of Theorem~\ref{thm:main}, and
the proof of the upper bound of $k+O(1)$.
We believe the proof to be quite accessible as
it only utilizes widely known tools from combinatorics, discrete optimization, and geometry,
together with simple number-theoretic observations.

The proof is based on the analysis of the geometric and number-theoretic structure of a subset of $\ZZ^2$ that
describes the homotopy classes of curves contained in a $k$-system;
we introduce the relevant notation and basic properties in Section~\ref{sec:prelim}.
Those subsets of $\ZZ^2$ corresponding to $k$-systems on the torus will be called \emph{$k$-nice} throughout the paper.
In Section~\ref{sec:large},
we show that the area of the convex hull of any $k$-nice set is at most $\frac{\pi}{2}k$ and
show that the size of any $k$-nice set with height $h$ is at most $Ck+O(h)$ for a constant $C\in (0,1)$ (Lemma~\ref{lm:density});
the \emph{height} of a subset is the smallest $h$ such that it is contained in a strip with width $2h$ centered around the $x$-axis.
This result relies, implicitly, on the fact that the density of coprime points in $\ZZ^2$ is asymptotically equal to the Euler product $\prod_p\left(1-p^{-2}\right)$ over all primes $p$, which is equal to $\frac{1}{\zeta(2)}=\frac{6}{\pi^2}$ and, crucially, is strictly smaller than $\frac{2}{\pi}$.

In Section~\ref{sec:bound},
we focus on analyzing the height of $k$-nice sets and the size of $k$-nice sets with specific height.
First, we show that any $k$-nice set can be modified to a $k$-nice set of the same size
with height at most $\sqrt{2k}$ (Lemma~\ref{lm:sqroot}).
The size of a $k$-nice set with height $h$ is analyzed by a suitable linear program,
which yields that every $k$-nice set with height $h$
has at most $\gamma_h k+\beta_h$ elements for some $\gamma_h\in (0,1)$ (Lemmas~\ref{lm:size}~and~\ref{lm:LP}) whenever $h\ge 4$.
In particular,
if $k$ is sufficiently large, the size of $k$-nice set of height $h\ge 4$
is either at most $Ck+O(\sqrt{k})<k$ by Lemmas~\ref{lm:density} and~\ref{lm:sqroot} or
at most $\gamma_h k+\beta_h<k$ by Lemmas~\ref{lm:size}~and~\ref{lm:LP}.
This line of reasoning is refined using computer assistance to eventually yields that
for every $k\ge 1892$, there exists a $k$-nice set with maximum size that has height at most three (Theorem~\ref{thm:k0new}).

To complete the proof of Theorem~\ref{thm:main},
we determine the maximum size of $k$-nice sets with height at most three in Section~\ref{sec:three} and
we determine the maximum size of $k$-nice sets for $k\in\{3,\ldots,1891\}$ with computer assistance in Section~\ref{sec:algorithm}.
Without computer assistance, it is possible to show a weaker version of Theorem~\ref{thm:k0new}
where $1892$ is replaced with a larger constant $k_0$ (the value $k_0$ is set in the proof of Theorem~\ref{thm:main2});
this yields a computer-free proof that every $k$-nice set for $k\ge k_0$ has size at most $k+4$ and
so the general upper bound $k+O(1)$ on the size of a $k$-nice set (see the discussion before Theorem~\ref{thm:main2}).

\section{Preliminaries}
\label{sec:prelim}

We now recall basic results concerning closed simple curves in the torus, which we use later;
we refer to e.g.~\cite{Sti12} for a more detailed exposition.
We view the torus as $\RR^2/\ZZ^2$ and
let $C_{m,n}$ for $(m,n)\in\ZZ^2$, $(m,n)\not=0$, be the closed curve in the torus parameterized as $(m\cdot t\mod 1,n\cdot t\mod 1)$ for $t\in[0,1]$.
Every non-trivial\footnote{A closed curve is trivial if it is homotopy-equivalent to a point.} closed curve in the torus is freely homotopic to $C_{m,n}$ for some non-zero $(m,n)\in\ZZ^2$;
if the curve is non-self-intersecting, then $m$ and $n$ are coprime, i.e., $\gcd(m,n)=1$.
Note that $(2,0)$ is not a coprime pair as $\gcd(2,0)=2$.

Consider coprime pairs $(m,n)$ and $(m',n')$ of non-zero integers.
The minimum number of crossings of closed simple curves freely homotopic to $C_{m,n}$ and $C_{m',n'}$ is equal to $|mn'-m'n|$, and
this minimum is attained by the curves $C_{m,n}$ and $C_{m',n'}$ themselves.
This leads us to the following definition:
for an integer $k\in\NN$,
we say that a set $Q\subseteq\ZZ^2$ is \emph{$k$-nice} if
\begin{itemize}
\item $Q$ contains non-zero coprime pairs only,
\item it does not contain both $(m,n)$ and $(-m,-n)$ for any $(m,n)\in\ZZ^2$, and
\item $|mn'-m'n|\le k$ for all $(m,n)$ and $(m',n')$ contained in $Q$.
\end{itemize}
Since $(m,n)$ and $(-m,-n)$ represent opposite orientations of the same curve,
any $k$-system of simple closed curves on the torus can be represented by a $k$-nice set $Q\subseteq\ZZ^2$, and
vice versa.
Hence, to prove Theorem~\ref{thm:main}, it suffices to determine the maximum size of a $k$-nice set for every $k\in\NN$.

Let $A \in \ZZ^{2\times 2}$ be a unimodular matrix, i.e., $A$ has integer coordinates and $|\det A|=1$.
Observe that
if $Q\subseteq\ZZ^2$ is $k$-nice, then the set
\[AQ=\{Ax, x\in Q\}\]
is also $k$-nice.
Indeed, since $A$ is unimodular, the inverse $A^{-1}$ is also unimodular and so,
if $Ax$ were not coprime, i.e., both coordinates of $Ax$ were divisible by an integer $s>1$,
then both coordinates of $A^{-1}Ax=x$ would also be divisible by $s$.
Likewise, if $x\in Q$ and $y\in Q$,
then $|\det (x|y)|=| x_1y_2-x_2y_1|\le k$ and so
\[|(Ax)_1(Ay)_2-(Ax)_2(Ay)_1|=|\det (Ax|Ay)|=|\det A|\cdot|\det (x|y)|=|\det (x|y)|\le k.\]
Note that the application of the matrix $A$ to the elements of the set $Q\subseteq\ZZ^2$
corresponds to a reparameterization of the torus.
Also note that negating both coordinates of an element of $Q$ does not change what curve it corresponds to.
Hence, we say that two subsets $Q$ and $Q'$ of $\ZZ^2$ are \emph{equivalent}
if there exists an integer unimodular matrix $A$ such that $Q'$ can be obtained from $AQ$ by negating a subset of its elements.
Since the operations of multiplying each element by $A$ and negating a subset of elements commute, and
$A^{-1}$ is also unimodular,
it is not hard to see that being equivalent is indeed an equivalence relation on $k$-nice sets.

We say that $Q\subseteq\ZZ^2$ is
\emph{$x$-non-negative} if $m\ge 0$ for all $(m,n)\in Q$, and
it is
\emph{$y$-non-negative} if $n\ge 0$ for all $(m,n)\in Q$.
The \emph{height} of a set $Q\subseteq\ZZ^2$ is the maximum $z\in\NN$ such that the set $Q$ contains $(m,n)$ with $|n|=z$, and
the \emph{width} is the maximum $z\in\NN$ such that the set $Q$ contains $(m,n)$ with $|m|=z$.

We conclude this section with the following observation
on inclusion-wise maximal $k$-nice $y$-non-negative sets.

\begin{proposition}
\label{prop:maximal}
Let $Q$ be an inclusion-wise maximal $k$-nice $y$-non-negative subset of $\ZZ^2$.
If $(m,n)\in\ZZ^2$ is contained in the convex hull of $Q$ and the integers $m$ and $n$ are coprime,
then $(m,n)$ is contained in $Q$.
\end{proposition}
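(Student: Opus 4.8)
The plan is to leverage the inclusion-wise maximality of $Q$. Assume for contradiction that $(m,n)\notin Q$; then $Q\cup\{(m,n)\}$ strictly contains $Q$, so by maximality it fails to be a $k$-\nice{} $y$-non-negative set, and I would rule out, one by one, every way in which this could happen. To set things up, fix a representation of $(m,n)$ as a convex combination $(m,n)=\sum_i\lambda_i q_i$ with $q_i=(m_i,n_i)\in Q$, all $\lambda_i\ge 0$, and $\sum_i\lambda_i=1$; this exists since $(m,n)$ lies in the convex hull of $Q$.

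Two of the required properties are immediate: $(m,n)$ is a non-zero coprime pair by hypothesis (coprimality excludes $(0,0)$), and $n=\sum_i\lambda_i n_i\ge 0$ since each $n_i\ge 0$, so $Q\cup\{(m,n)\}$ is still $y$-non-negative. The heart of the matter is the determinant bound, and the key observation is simply that the map $(x,y)\mapsto\det(x|y)=x_1y_2-x_2y_1$ is linear in its first argument. Hence, for every $q'=(m',n')\in Q$,
\[
|mn'-m'n|=\Bigl|\det\Bigl(\sum_i\lambda_i q_i \;\Bigm|\; q'\Bigr)\Bigr|=\Bigl|\sum_i\lambda_i\det(q_i|q')\Bigr|\le\sum_i\lambda_i\,|\det(q_i|q')|\le\sum_i\lambda_i k=k,
\]
where the last inequality uses that $q_i,q'\in Q$ and that $Q$ is $k$-\nice. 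Thus no pair involving $(m,n)$ violates the bound $k$.

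Combining the last two paragraphs, the only defining property of a $k$-\nice{} $y$-non-negative set that $Q\cup\{(m,n)\}$ can fail is that it must not contain an antipodal pair; as $Q$ itself contains none, we are forced into the situation $(-m,-n)\in Q$. But then $-n\ge 0$ by $y$-non-negativity of $Q$, while $n\ge 0$ as shown above, so $n=0$, and coprimality of $(m,0)$ gives $m=\pm 1$. Now $(m,0)$ lies in the convex hull of $Q$, which is contained in the closed half-plane $\{y\ge 0\}$; since $(m,0)$ sits on the bounding line $\{y=0\}$, in the representation $(m,0)=\sum_i\lambda_i q_i$ every $q_i$ with $\lambda_i>0$ must satisfy $n_i=0$ and hence equal $(1,0)$ or $(-1,0)$. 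Since $Q$ contains at most one of these two points, the representation is trivial and $(m,0)\in Q$, contradicting $(m,n)\notin Q$. This exhausts all cases. I expect this degenerate configuration on the line $\{y=0\}$ to be the only genuine (and quite minor) subtlety; the rest of the argument is an immediate consequence of the linearity of the determinant together with passing to convex combinations.
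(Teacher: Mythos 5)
Your proof is correct and takes essentially the same approach as the paper's: both rest on the linearity of the pairing $(x,y)\mapsto xn'-m'y$ over convex combinations, though you bound $|mn'-m'n|$ directly by the triangle inequality where the paper argues by contradiction via the fact that a linear functional on a convex set attains its extremum at an extreme point. You are also somewhat more explicit than the paper in handling the degenerate antipodal case $n=0$, $m=\pm1$, which the paper dispatches with a terse remark.
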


\begin{proof}
Fix an inclusion-wise maximal $k$-nice $y$-non-negative set $Q\subseteq\ZZ^2$ and
let $(m,n)\in\ZZ^2$ be contained in the convex hull of $Q$ such that $m$ and $n$ are coprime.
Note that $n\geq 0$ as $Q$ is $y$-non-negative.
Also note that since $Q$ is $k$-nice,
it can contain at most one element of the form $(a,0)$ (the value of $a$ is either $-1$ or $+1$).

Suppose that a point $(m,n)\in\ZZ^2$ with $m$ and $n$ being coprime is not contained in~$Q$;
note that $(-m,-n)$ is also not contained in $Q$ as $Q$ is $y$-non-negative.
The inclusion-wise maximality of $Q$ implies that
there exists $(m',n')\in Q$ such that $|mn'-m'n|>k$.
If $mn'-m'n>k$,
then there must exist $(m'',n'')\in Q$ such that $m''n'-m'n''>k$ (as $(m,n)$
is contained in the convex hull of $Q$ and any linear function on a convex set is maximized at a boundary point),
which is impossible.
Similarly,
if $mn'-m'n<-k$, then there must exist $(m'',n'')\in Q$ such that $m''n'-m'n''<-k$, which is also impossible.
We conclude that $(m,n)$ is contained in~$Q$.
\end{proof}

\section{Nice sets with large height}
\label{sec:large}

In this section, we show that, when $h$ is sufficiently large,
the size of a $k$-nice set with height $h$ does not exceed $\gamma k+O(h)$ for some $\gamma\in (0,1)$.
We start with the following auxiliary geometric result,
which is a geometric analogue of the upper bound that we wish to prove.

\begin{lemma}
\label{lm:area}
Let $k\in\NN$.
If $S\subseteq\RR^2$ is a convex set such that $y\ge 0$ for every $(x,y)\in S$ and
$|xy'-yx'|\le k$ for all $(x,y),(x',y')\in S$,
then the area of $S$ is at most $\pi k/2$.
\end{lemma}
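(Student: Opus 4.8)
The plan is to reduce to the case where $S$ is symmetric about the origin and then invoke a classical area bound. The hypothesis says that the symplectic (determinant) form is bounded by $k$ on all pairs of points of $S$; this is exactly the statement that $S-S = \{p-q : p,q \in S\}$ has the property that $|\det(u \mid v)| \le k$ for... no, that's not quite right either. Let me think about what the determinant condition gives geometrically.

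First I would observe that the condition $|xy'-yx'|\le k$ for all pairs in $S$ is equivalent, after fixing one point, to a width-type bound. Concretely, for a fixed $(x',y')\in S$ with $(x',y')\neq 0$, the linear functional $(x,y)\mapsto xy'-yx'$ has absolute value at most $k$ on $S$, so $S$ lies in a strip of width $k/\|(x',y')\| \cdot 2$ wait, width $2k/\|(x',y')\|$ perpendicular to... Actually the cleanest route: I would show the diameter-like quantity is controlled. Let $p,q\in S$ be two points; then $\tfrac12|\det(p\mid q)|$ is the area of triangle $Opq$, which is at most $k/2$. So every triangle with one vertex at the origin and two vertices in $S$ has area at most $k/2$. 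Since $S\subseteq\{y\ge0\}$, the origin is "below" $S$, and one checks that the area of $S$ itself is bounded by a constant times $k$.

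The key step is to get the sharp constant $\pi/2$. Here I would argue as follows: the area of $S$ equals $\int$ over boundary, and using that $O$ lies in the closed lower half-plane while $S$ lies in the upper half-plane, one can express $\mathrm{area}(S)$ via the polar "sweep" from $O$. Parmeterize the boundary of $S$ as seen from the origin by angle $\theta\in[\theta_0,\theta_1]\subseteq[0,\pi]$ with radial distance $r(\theta)$; then $\mathrm{area}(S)=\tfrac12\int_{\theta_0}^{\theta_1} r(\theta)^2\,\dd\theta$. The constraint is that for any two boundary points at angles $\theta<\phi$, the triangle area $\tfrac12 r(\theta)r(\phi)\sin(\phi-\theta)\le k/2$, i.e. $r(\theta)r(\phi)\sin(\phi-\theta)\le k$. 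Taking $\phi-\theta\to0$ is vacuous, so the binding case is to choose pairs optimally; the extremal configuration is a half-disk of radius $\sqrt{k}$ (any two points $p,q$ in a half-disk of radius $R$ through whose diameter-line passes $O$ satisfy $\mathrm{area}(Opq)\le R^2/2$, with equality for antipodal boundary points), which has area $\pi k/2$. So the plan is to prove that among all convex $S$ in the upper half-plane with $r(\theta)r(\phi)\sin(\phi-\theta)\le k$ for all boundary angles, the half-disk maximizes area, giving the bound $\pi k/2$.

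I would carry this out by a symmetrization/averaging argument: fix $\theta$ and $\phi=\theta+\pi-$(something); more robustly, use $r(\theta)^2 \le$ something by pairing $\theta$ with the angle $\theta'$ maximizing $\sin(\theta'-\theta)$ among available directions. Concretely, for the supporting argument: for every direction $\theta$ in the angular range of $S$, pick the point of $S$ farthest in a perpendicular sense; the cleanest is to bound $\int r(\theta)^2\,\dd\theta$ by noting $r(\theta)r(\theta+\pi/2\text{-ish})\le k$ and Cauchy–Schwarz or an AM–GM over antipodal-ish pairs. \textbf{The main obstacle} I anticipate is nailing the extremal constant $\pi/2$ rather than some worse absolute constant: the determinant condition bounds products $r(\theta)r(\phi)$ only with the $\sin$ weight, so one must handle directions $\theta,\phi$ that are close together (where $\sin(\phi-\theta)$ is tiny and gives no information) — there convexity of $S$ must be used to rule out $r$ blowing up. I expect the slick resolution is: reflect $S$ through $O$ to get $-S$, note $S\cup(-S)$ spans directions in an arc of length $\le\pi$ plus its antipode (so essentially all directions once doubled), take the convex hull $K:=\mathrm{conv}(S\cup(-S))$ which is centrally symmetric, show the determinant condition forces $\mathrm{area}(K)\le 2\pi k$ via the standard fact that a centrally symmetric convex body on which the symplectic form is bounded by $k$ (equivalently, contained between $-v$ and $v$ for the relevant widths) has bounded area — and $\mathrm{area}(S)=\tfrac12\mathrm{area}(K)$ since $S$ and $-S$ meet only along the $x$-axis, yielding $\mathrm{area}(S)\le\pi k/2$. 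Verifying that $\mathrm{area}(K)=2\,\mathrm{area}(S)$ (i.e. that $\mathrm{conv}(S\cup -S)$ doesn't strictly contain more than $S\cup -S$ in area) is the delicate point and will use $S\subseteq\{y\ge0\}$ essentially.
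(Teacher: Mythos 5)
Your route is genuinely different from the paper's and the core idea is salvageable, but as written it contains two errors that would sink the constant. The paper's proof is a short, self-contained calculus argument: scale to $k=1$, apply the area- and determinant-preserving map $(x,y)\mapsto(xz,\,y/z)$ so that $\max\{y:(x,y)\in S\}=1$, pair each height $y\in[0,1]$ with $y'=\sqrt{1-y^2}$ to extract $f^+(y)-\tfrac{y}{\sqrt{1-y^2}}f^-\bigl(\sqrt{1-y^2}\bigr)\le\tfrac{1}{\sqrt{1-y^2}}$ from the determinant hypothesis, and integrate; the change of variables collapses the second term to $\int_0^1 f^-$ and the right side is exactly $\pi/2$. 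Your symmetrization route, via $K:=\mathrm{conv}\bigl(S\cup(-S)\bigr)$, also works but imports a heavier tool: writing $\omega(u,v)=u_1v_2-u_2v_1$, the bound $|\omega|\le k$ extends from $S$ to $K$ (bilinearity plus extreme points), and is equivalent to $JK\subseteq kK^{\circ}$ with $J$ the $90^{\circ}$ rotation, so $\mathrm{Area}(K)=\mathrm{Area}(JK)\le k^2\,\mathrm{Area}(K^{\circ})$; Blaschke--Santal\'o then gives $\mathrm{Area}(K)\cdot\mathrm{Area}(K^{\circ})\le\pi^2$, hence $\mathrm{Area}(K)\le\pi k$. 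That is the ``standard fact'' you need --- and it is $\pi k$, \emph{not} $2\pi k$; with your $2\pi k$ the chain yields only $\mathrm{Area}(S)\le\pi k$, losing the factor of two. The second issue is the claim $\mathrm{Area}(K)=2\,\mathrm{Area}(S)$, which you flag as the delicate point to verify. As stated it is simply false: if $S$ is, say, a small disc centred at $(0,1)$, then $\mathrm{conv}(S\cup(-S))$ acquires a whole strip near the $y$-axis lying in neither $S$ nor $-S$. But you are worrying about the wrong direction --- all you need is $\mathrm{Area}(K)\ge 2\,\mathrm{Area}(S)$, which is immediate since $K\supseteq S\cup(-S)$ and $S$, $-S$ have disjoint interiors (one in each closed half-plane). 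With the corrected constant $\pi k$ and the trivial inequality in place of the false equality, your chain $\mathrm{Area}(S)\le\tfrac12\mathrm{Area}(K)\le\tfrac{\pi k}{2}$ does give the sharp bound; the trade-off versus the paper is that you invoke a nontrivial classical theorem where the paper's argument is entirely elementary.
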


\begin{proof}
It is enough to prove the lemma for $k=1$.
Indeed, if $k>1$,
the set $S'=\left\{\left(x/\sqrt{k}, y/\sqrt{k}\right)  :  (x,y)\in S\right\}$ satisfies
the assumption of the lemma for $k=1$ and its area is the area of $S$ divided by $k$.

Fix a convex set $S\subseteq\RR^2$ such that $y\ge 0$ for every $(x,y)\in S$ and
$|xy'-yx'|\le 1$ for all $(x,y),(x',y')\in S$.
We may assume that $S$ is closed since the closure of $S$ also satisfies the assumption of the lemma.
In addition, we may assume that $S$ is bounded:
if the area of any convex bounded subset of $S$ is at most $\pi/2$,
then the area of $S$ is at most $\pi/2$.
Finally, we may assume that $\max\{y  :  (x,y)\in S\}=1$ (note that maximum exists as $S$ is bounded and closed):
if $\max\{y  :  (x,y)\in S\}=z\not=1$,
then we can consider $S'=\left\{\left( xz,y/z\right)  :  (x,y)\in S\right\}$ instead of $S$,
which has the same area as $S$ and also satisfies the assumptions of the lemma.

We next define two auxiliary functions $f^-,f^+:[0,1]\to\RR$ as follows:
\begin{align*}
f^-(y) & =\min\{x  :  (x,y)\in S\},\\
f^+(y) & =\max\{x  :  (x,y)\in S\}.
\end{align*}
For $y\in [0,1]$, let $y'=\sqrt{1-y^2}$;
since the points $(f^+(y),y)$ and $(f^-(y'),y')$ are contained in $S$,
we get
\[f^+(y)-\frac{y}{\sqrt{1-y^2}}f^-\left(\sqrt{1-y^2}\right)\le\frac{1}{\sqrt{1-y^2}}.\]
It follows that
\begin{equation}
\int\limits_{0}^{1}f^+(y)\dd y-\int\limits_{0}^{1}\frac{y}{\sqrt{1-y^2}}f^-\left(\sqrt{1-y^2}\right)\dd y
\le \int\limits_{0}^{1}\frac{1}{\sqrt{1-y^2}}\dd y=\frac{\pi}{2}.
\label{eq:area1}
\end{equation}
We next obtain by substituting that
\begin{equation}
\int\limits_{0}^{1}\frac{y}{\sqrt{1-y^2}}f^-\left(\sqrt{1-y^2}\right)\dd y=
-1 \int\limits_{1}^{0}f^-(y)\dd y=\int\limits_{0}^{1}f^-(y)\dd y.
\label{eq:area2}
\end{equation}
We combine \eqref{eq:area1} and \eqref{eq:area2} to conclude that the area of $S$ is
\[
\int\limits_{0}^{1}f^+(y)-f^-(y)\dd y=
\int\limits_{0}^{1}f^+(y)\dd y-\int\limits_{0}^{1}\frac{y}{\sqrt{1-y^2}}f^-\left(\sqrt{1-y^2}\right)\dd y \le \frac{\pi}{2},\]
which finishes the proof.
\end{proof}

We next define quantities $\rho_{\ell}$ and $\alpha_{\ell}$ for every $\ell\in\NN$;
their numerical values for $\ell\in\{1,\ldots,20\}$ can be found in Table~\ref{tab:rhobeta} in Section~\ref{sec:bound} and
a python script \verb|compute_rho_alpha_beta_gamma.py| for computing the values
is available as an ancillary file with the arXiv version of the manuscript.
The quantities are defined as follows:
\begin{align*}
\rho_{\ell} & =\prod_{\mbox{{\scriptsize primes} } p,\, p|\ell}\left(1-\frac{1}{p}\right)\mbox{ and}\\
\alpha_{\ell} & =\max_{1\le a\le b\le 2\ell}\left|\{z, a\le z\le b\mbox{ and }\gcd(z,\ell)=1\}\right|-\rho_{\ell} (b-a+1).
\end{align*}
Observe that if $X$ is a set of $n$ consecutive integers,
then at most $\rho_{\ell} n+\alpha_{\ell}$ elements of $X$ are coprime with $\ell$.
We state this fact as a proposition for future reference.

\begin{proposition}
\label{prop:rhoalpha}
Let $X$ be a set of $n$ consecutive integers and let $\ell\in\NN$.
The number of $x\in X$ that are coprime with $\ell$ is at most $\rho_{\ell} n+\alpha_{\ell}$.
\end{proposition}

We are now ready to prove the main lemma of this section.

\begin{lemma}
\label{lm:density}
For every $h\ge 41020$ and every $k\ge h$,
the maximum size of a $k$-nice set with height $h$ is at most
\[\frac{3264\pi}{10255}\cdot k+\frac{4946}{3675}\cdot h+1.\]
\end{lemma}

\begin{proof}
Set $h_0=41020$.
Consider a $k$-nice set $Q$ with height $h\ge h_0$ for some $k\ge h$;
without loss of generality, we may assume that $(1,0)\in Q$ (we use that $h\le k$) and
$Q$ is $y$-non-negative (by replacing any element $(x,y)$ with $y<0$ with the element $(-x,-y)$).
Let $\widehat{Q}$ denote the convex hull of $Q$,
let $s_i$ and $t_i$ be the minimum and maximum real such that
the points $(s_i,i)$ and $(t_i,i)$ are contained in $\widehat{Q}$ for $i=0,\ldots,h$, and
let $\ell_i=t_i-s_i$.
Note that $\ell_0=0$.
Finally, for $i=1,\ldots,h$,
let $P_i$ be the set of all $p\in\{2,3,5,7\}$ such that $p|i$ (for instance, $P_{132}=\{2,3\}$) and let
$p_i$ be the product of the elements contained in $P_i$;
if $P_i=\emptyset$, we set $p_i=1$.

Since $\widehat{Q}$ is convex, the sequence $\ell_0,\ldots,\ell_h$ is concave and in particular unimodal,
i.e., the values of $\ell_i$'s first increase and then decrease;
let $m\in\{0,\ldots,h\}$ be such such that $\ell_m$ is the maximum element of this sequence.
Lemma~\ref{lm:area} and the convexity of~$\widehat{Q}$ imply that
\begin{equation}
\sum_{i=1}^{h}\ell_i
  =\frac{\ell_h}{2}+\sum_{i=1}^{h}\frac{\ell_{i-1}+\ell_i}{2}
  \le\frac{\pi}{2}\cdot k+\frac{\ell_m}{2},
\label{eq:sumell}
\end{equation}
as the sum in the middle expression is a lower bound on the area of $\widehat{Q}$.
On the other hand, note that $\widehat{Q}$ contains the triangles with corners $( s_m, m), (t_m, m), (s_0, 0)$ and $(s_m, m), (t_m, m), (s_h, h)$, which have a combined area of $h\ell_m/2$. Thus, the area of $\widehat{Q}$ is at least $h\ell_m/2$, which yields using Lemma~\ref{lm:area} that
\begin{equation}
\ell_m\le\frac{2}{h}\cdot\frac{\pi}{2}\cdot k\le\frac{\pi}{h_0}\cdot k.
\label{eq:ellm}
\end{equation}

Let $H$ be the smallest multiple of $210=2\cdot 3\cdot 5\cdot 7$ larger than $h$ and set $\ell_{h+1}=\cdots=\ell_H=0$;
note that the sequence $\ell_0,\ldots,\ell_H$ is unimodal.
Define $I_a$ for $a\in\{1,\ldots,210\}$ as the set of all $i\in\{1,\ldots,H\}$ such that $i=a\mod 210$;
note that $\lvert I_a\rvert=H/210$.
We next show that the following holds for any $a,b\in\{1,\ldots,210\}$:
\begin{equation}
\left\lvert\sum_{i\in I_a}\ell_i-\sum_{i\in I_b}\ell_i\right\rvert\le\ell_m.
\label{eq:ab}
\end{equation}
By symmetry, we may assume that $a<b$ (the inequality \eqref{eq:ab} trivially holds if $a=b$).
Observe that there exists $A$ such that $\ell_{210A+a}\ge\ell_i$ for all $i\in I_a\cup I_b$ or
there exists $B$ such that $\ell_{210B+b}\ge\ell_i$ for all $i\in I_a\cup I_b$.
The two cases are completely analogous and so we analyze the former case only.
Observe that $\ell_{210j+a}\le\ell_{210j+b}$ for $j\in\{0,\ldots,A-1\}$ and
$\ell_{210(j+1)+a}\le\ell_{210j+b}$ for $j\in\{A,\ldots,H/210-2\}$.
We obtain that
\[\sum_{i\in I_a}\ell_i
  \le\ell_{210A+a}+\sum_{j=0,j\not=A}^{H/210-1}\ell_{210j+a}
  \le\ell_{210A+a}+\sum_{j=0}^{H/210-2}\ell_{210j+b}
  \le\ell_m+\sum_{i\in I_b}\ell_i.\]
Likewise, it holds that $\ell_{210j+b}\le\ell_{210(j+1)+a}$ for $j\in\{0,\ldots,A-1\}$ and
$\ell_{210j+b}\le\ell_{210j+a}$ for $j\in\{A,\ldots,H/210-1\}$, and
we obtain that
\[\sum_{i\in I_b}\ell_i=\sum_{j=0}^{H/210-1}\ell_{210j+b}
  \le\sum_{j=1}^{A}\ell_{210j+a}+\sum_{j=A}^{H/210-1}\ell_{210j+a}
  \le\ell_{210A+a}+\sum_{i\in I_a}\ell_i
  \le\ell_{m}+\sum_{i\in I_a}\ell_i.\]
Hence, the inequality \eqref{eq:ab} follows.
Using \eqref{eq:sumell} and \eqref{eq:ab}, we obtain that the following holds for every $a\in\{1,\ldots,210\}$:
\[
\sum_{i\in I_a}\ell_i
 \le\ell_m+\frac{1}{210}\sum_{i=1}^{H}\ell_i
 \le\frac{\pi}{2\cdot 210}\cdot k+\frac{3\ell_m}{2},
\]
which yields using \eqref{eq:ellm} that
\begin{equation}
\sum_{i\in I_a}\ell_i
 \le\frac{\pi}{420}\cdot k+\frac{3\pi}{2h_0}\cdot k
 =\frac{\pi}{2}\cdot\left(\frac{1}{210}+\frac{3}{h_0}\right)\cdot k
\label{eq:Ia}
\end{equation}
Finally, we obtain using \eqref{eq:Ia} the following:
\begin{align}
\sum_{i=1}^h \ell_i\prod_{p\in P_i}\left(1-\frac{1}{p}\right)
& = \sum_{a=1}^{210}\left(\sum_{i\in I_a}\ell_i\right)\prod_{p\in P_a}\left(1-\frac{1}{p}\right)\nonumber\\
& \le \frac{\pi}{2}\cdot\left(\frac{1}{210}+\frac{3}{h_0}\right)\cdot k\cdot\sum_{a=1}^{210}\prod_{p\in P_a}\left(1-\frac{1}{p}\right)\nonumber\\
& = \frac{\pi}{2}\cdot\left(\frac{1}{210}+\frac{3}{h_0}\right)\cdot k\cdot 210\cdot\prod_{p=2,3,5,7}\left(1-\frac{1}{p^2}\right)
  = \frac{3264\pi}{10255}\cdot k. \label{eq:density}
\end{align}
For $i=1,\ldots,h$, let $Q_i$ be the set of the elements of $Q$ with their second coordinate equal to $i$.
Proposition~\ref{prop:rhoalpha} yields that
\begin{equation}
|Q_i|\le \rho_{p_i}(\ell_i+1)+\alpha_{p_i}=\rho_{p_i}+\alpha_{p_i}+\ell_i\prod_{p\in P_i}\left(1-\frac{1}{p}\right)
\label{eq:Qi}
\end{equation}
since at most $\rho_{p_i}(\ell_i+1)+\alpha_{p_i}$ integers among $\lceil s_i\rceil,\ldots,\lfloor t_i\rfloor$
are coprime with $p_i$ (note that every integer coprime with $i$ is also coprime with $p_i$).
Using computer assistance,
we have verified for all $\ell\in\{1,\ldots,210\}$ (the equality is attained for $\ell=210$) that
\begin{equation}
\sum_{i=1}^{\ell} \rho_{p_i}+\alpha_{p_i}
  \le \frac{4946}{3675}\cdot\ell.
\label{eq:sum210}
\end{equation}
We now combine \eqref{eq:Qi} with \eqref{eq:density} and \eqref{eq:sum210} to get that
\[\sum_{i=1}^h |Q_i|\le \frac{3264\pi}{10255}\cdot k+
  \sum_{i=1}^{210}\left\lceil\frac{h-i+1}{210}\right\rceil\cdot\left(\rho_{p_i}+\alpha_{p_i}\right)
  \le \frac{3264\pi}{10255}\cdot k+\frac{4946}{3675}\cdot h,\]
where the first inequality follows since $|\{j : 1 \leq j \leq h \text{ and } p_j = p_i\}| = \left\lceil\frac{h-i+1}{210}\right\rceil$.
Since $(1,0)$ is the only element of $Q$ with a zero $y$-coordinate, the statement of the lemma follows.
\end{proof}

\section{Bounding the height of a nice set}
\label{sec:bound}

In this section, we show that if $k$ is sufficiently large,
we may assume that the height of a maximum size $k$-nice set is at most three.
We start with showing a sublinear upper bound.

\begin{lemma}
\label{lm:sqroot}
For any $k$-nice set $Q$,
there exists a $k$-nice set equivalent to $Q$ that has height at most $\sqrt{2k}$.
\end{lemma}

\begin{proof}
Consider a $k$-nice set $Q$ and
let $Q_0$ be a set equivalent to $Q$ with the smallest possible height $h$;
by negating points if needed, we may assume that $Q_0$ is $y$-non-negative.

If $h\le\sqrt{2k}$, we are done.
Suppose for contradiction that $h>\sqrt{2k}$, and
let $(x_0,h)$ be the point in $Q_0$ with the maximum second coordinate.
By considering the set $A^mQ_0$ for a suitable $m\in\ZZ$,
where $A$ is the matrix
\[A=\begin{bmatrix} 1 & 1 \\ 0 & 1 \end{bmatrix},\]
we can assume that $|x_0|\le h/2$.
Observe that if $(x,y)\in Q_0$, then
\[\left|x-\frac{x_0}{h}y\right|\le\frac{k}{h},\]
which yields 
\[|x|\le\frac{|x_0|}{h}y+\frac{k}{h}\le |x_0|+\frac{k}{h}\le\frac{h}{2}+\frac{k}{h}.\]
We conclude that the width of $Q_0$ is at most $\frac{h}{2}+\frac{k}{h}$.
In particular, the height of the set $A'Q$,
where $A'$ is the matrix
\[A'=\begin{bmatrix} 0 & 1 \\ -1 & 0 \end{bmatrix},\]
is at most
\[\frac{h}{2}+\frac{k}{h}<\frac{h}{2}+\frac{\sqrt{2k}}{2}<h,\]
which contradicts the choice of $Q_0$ as a set equivalent to $Q$ that has the smallest possible height.
\end{proof}

We next consider the following linear program \LPl{}
with $2\ell$ variables $\sigma_1,\ldots,\sigma_{\ell}$ and $\tau_1,\ldots,\tau_{\ell}$ defined as follows:
\begin{align*}
\mbox{maximize } & \sum_{i=1}^\ell \rho_i (\tau_i-\sigma_i) \\
\mbox{subject to }
 & \qquad \tau_i \ge \sigma_i \ge 0 & & \mbox{for all $1\le i\le \ell$, and}\\
 & -1\le i\tau_j-j\sigma_i\le 1 & & \mbox{for all $1\le i,j\le \ell$.}
\end{align*}
The objective value of the program \LPl{} is denoted by $\gamma_{\ell}$ for $\ell\in\NN$;
again, the values of $\gamma_{\ell}$ for $\ell\in\{1,\ldots,20\}$ can be found in Table~\ref{tab:rhobeta};
we will show that $\gamma_{\ell}<1$ for every $\ell\ge 4$ (see Lemma~\ref{lm:LP}).
Finally, define $\beta_0=1$ and $\beta_{\ell}=\beta_{\ell-1}+\alpha_{\ell}+\rho_{\ell}$;
again, the values of $\beta_{\ell}$ for $\ell\in\{1,\ldots,20\}$ are in Table~\ref{tab:rhobeta}.

\begin{table}
\begin{center}
\begin{tabular}{|c|cccc|}
\hline
$\ell$ & $\rho_{\ell}$ & $\alpha_{\ell}$ & $\gamma_{\ell}$ & $\beta_{\ell}$ \\
\hline
1 & 1.0000 & 0.0000 & 1.0000 &  2.0000 \\
2 & 0.5000 & 0.5000 & 1.0000 &  3.0000 \\
3 & 0.6667 & 0.6667 & 1.0000 &  4.3333 \\
4 & 0.5000 & 0.5000 & 0.9722 &  5.3333 \\
5 & 0.8000 & 0.8000 & 0.9917 &  6.9333 \\
6 & 0.3333 & 1.0000 & 0.9667 &  8.2667 \\
7 & 0.8571 & 0.8571 & 0.9752 &  9.9810 \\
8 & 0.5000 & 0.5000 & 0.9687 & 10.9810 \\
9 & 0.6667 & 0.6667 & 0.9695 & 12.3143 \\
10 & 0.4000 & 1.2000 & 0.9586 & 13.9143 \\
11 & 0.9091 & 0.9091 & 0.9679 & 15.7325 \\
12 & 0.3333 & 1.0000 & 0.9601 & 17.0658 \\
13 & 0.9231 & 0.9231 & 0.9680 & 18.9120 \\
14 & 0.4286 & 1.2857 & 0.9645 & 20.6262 \\
15 & 0.5333 & 1.3333 & 0.9605 & 22.4929 \\
16 & 0.5000 & 0.5000 & 0.9553 & 23.4929 \\
17 & 0.9412 & 0.9412 & 0.9617 & 25.3753 \\
18 & 0.3333 & 1.0000 & 0.9576 & 26.7086 \\
19 & 0.9474 & 0.9474 & 0.9634 & 28.6033 \\
20 & 0.4000 & 1.2000 & 0.9615 & 30.2033 \\
\hline
\end{tabular}
\end{center}
\caption{The numerical values of $\rho_{\ell}$, $\alpha_{\ell}$, $\gamma_{\ell}$ and $\beta_{\ell}$ for $\ell\in\{1,\ldots,20\}$.}
\label{tab:rhobeta}
\end{table}

We next relate the linear program \LPl{} to the sizes of $k$-nice sets.

\begin{lemma}
\label{lm:size}
Let $k\in\NN$.
For every $h\in\{1,\ldots,k\}$,
every $k$-nice set $Q\subseteq\ZZ^2$ with height exactly $h$ has at most $\gamma_h k+\beta_h$ elements.
\end{lemma}

\begin{proof}
Fix $k\in\NN$ and $h\in\{1,\ldots,k\}$,
Let $Q$ be a $k$-nice set with height exactly $h$.
By negating some of the elements of $Q$, we may assume that $Q$ is $y$-non-negative, and
by considering the set $A^m Q$ for sufficiently large $m\in\NN$ (if needed),
where
\[A=\begin{bmatrix} 1 & 1 \\ 0 & 1 \end{bmatrix},\]
we can also assume that the set $Q$ is $x$-non-negative.
Finally, since the height of $Q$ is $h\le k$, we can assume that $(1,0)\in Q$.

Let $s_i$ and $t_i$ for $i=1,\ldots,h$ be the minimum and maximum reals such that
$(s_i,i)$ and $(t_i,i)$ are in the convex hull of $Q$ (note that the values are well-defined
since $(1,0)\in Q$ and the height of $Q$ is exactly $h$).
Since any two points $(x,y)$ and $(x',y')$ of $Q$ satisfy $|xy'-x'y|\le k$,
it also holds that $|xy'-x'y|\le k$ for any two points $(x,y)$ and $(x',y')$ of the convex hull of $Q$.
Since $\sigma_i=s_i/k$ and $\tau_i=t_i/k$, $i=1,\ldots,h$, is a feasible solution of \LPh{},
we obtain that
\begin{equation}
\sum_{i=1}^h\rho_i (t_i-s_i)=k\sum_{i=1}^h\rho_i (\tau_i-\sigma_i)\le \gamma_h k.
\label{eq:lm:size}
\end{equation}

For $i\in\{0,\ldots,h\}$, let $Q_i$ be the points contained in $Q$ with their second coordinate equal to $i$.
Note that $|Q_0|=1$ and
\[|Q_i|\le \rho_i(t_i-s_i+1)+\alpha_i=\rho_i(t_i-s_i)+(\alpha_i+\rho_i)\]
for every $i\in\{1,\ldots,h\}$.
It follows using \eqref{eq:lm:size} and the definition of $\beta_h$ that
\[|Q|=\sum_{i=0}^h|Q_i|=\left(\sum_{i=1}^h \rho_i(t_i-s_i)\right)+1+\sum_{i=1}^h(\alpha_i+\rho_i)\le \gamma_h k+\beta_h.\]
This concludes the proof of the lemma.
\end{proof}

\subsection{Analysis of the linear program}

We now show that the optimal value of the linear program \LPl{} is smaller than $1$ for every $\ell\ge 4$.
We use a relaxed version, which is denoted by \LPpl{}, to analyze its optimum value.
The linear program \LPpl{} for $\ell\in\NN$ also has $2\ell$ variables $\sigma_1,\ldots,\sigma_\ell$ and $\tau_1,\ldots,\tau_\ell$ and
is defined as follows (note that the objective function is the same):
\begin{align*}
\mbox{maximize } & \sum_{i=1}^\ell \rho_i (\tau_i-\sigma_i) \\
\mbox{subject to }
 & \quad \tau_i, \sigma_i \ge 0 & & \mbox{for all $1\le i\le \ell$,}\\
 & \frac{\tau_j}{j}-\frac{\sigma_i}{i} \le \frac{1}{ij} & & \mbox{for all $1\le i,j\le \ell$.}
\end{align*}
Since any feasible solution of  \LPl{} is also a feasible solution of \LPpl{},
we obtain the following.

\begin{lemma}
\label{lm:LPp}
For every $\ell\in\NN$,
the optimum value of \LPl{} is at most the optimum value of \LPpl{}.
\end{lemma}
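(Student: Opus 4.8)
The plan is to show that every feasible solution of \LPh{} is also a feasible solution of \LPph{}, after which the inequality on optimum values is immediate since the two linear programs share the same objective function and we are maximizing. So the only work is to verify that the feasible region of \LPh{} is contained in the feasible region of \LPph{}.

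First I would take an arbitrary feasible point $(\sigma_1,\ldots,\sigma_h,\tau_1,\ldots,\tau_h)$ of \LPh{} and check the two families of constraints of \LPph{} in turn. For the non-negativity constraints $\tau_i,\sigma_i\ge 0$: the constraints of \LPh{} include $\tau_i\ge\sigma_i\ge 0$, which immediately gives both $\sigma_i\ge 0$ and $\tau_i\ge 0$. For the constraints $\tfrac{\tau_j}{j}-\tfrac{\sigma_i}{i}\le\tfrac{1}{ij}$ for all $1\le i,j\le h$: the relevant constraint of \LPh{} is $i\tau_j-j\sigma_i\le 1$ (the upper half of the two-sided inequality $-1\le i\tau_j-j\sigma_i\le 1$); dividing through by the positive quantity $ij$ yields exactly $\tfrac{\tau_j}{j}-\tfrac{\sigma_i}{i}\le\tfrac{1}{ij}$. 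Hence every feasible solution of \LPh{} satisfies all constraints of \LPph{}, so it is feasible for \LPph{}.

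Since the objective $\sum_{i=1}^h\rho_i(\tau_i-\sigma_i)$ is identical in the two programs and each is a maximization over a feasible region, enlarging the feasible region (from that of \LPh{} to that of \LPph{}) can only increase, or leave unchanged, the optimum value. Therefore the optimum of \LPh{} is at most the optimum of \LPph{}, which is the claim.

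There is essentially no obstacle here: the statement is a routine relaxation argument, and the only minor point to be careful about is that dividing the inequality $i\tau_j-j\sigma_i\le 1$ by $ij$ preserves the direction of the inequality, which holds because $i,j\ge 1>0$. One should also note that \LPph{} genuinely drops information — it keeps only the upper bound $i\tau_j-j\sigma_i\le 1$ and discards the lower bound $-1\le i\tau_j-j\sigma_i$, as well as weakening $\tau_i\ge\sigma_i$ to the pair $\tau_i\ge 0$, $\sigma_i\ge 0$ — so the two optima need not be equal, but for the purpose of an upper bound the one-sided comparison in the lemma is all that is needed.
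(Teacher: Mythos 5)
Your proof is correct and takes the same approach as the paper: you observe that every feasible solution of \LPh{} is also feasible for \LPph{}, and since both programs maximize the same objective, the optimum of the relaxed program \LPph{} is at least that of \LPh{}. The paper states this containment without elaboration, whereas you verify it explicitly by dividing $i\tau_j - j\sigma_i \le 1$ by $ij$ and checking non-negativity, which is exactly the intended (and omitted) calculation.
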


We are now ready to provide an upper bound on the optimum value of the linear program \LPl{},
which is obtained using the linear program \LPpl{}.

\begin{lemma}
\label{lm:LP}
Let $\ell\in\NN$.
If $\ell\in\{1,2,3\}$, then the optimum value of \LPl{} is equal to one, and
if $\ell\ge 4$, then the optimum value of \LPl{} is strictly smaller than one.
\end{lemma}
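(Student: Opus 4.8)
The plan is to analyze the linear programs \LPh{} and \LPph{} directly. First I would dispose of the small cases $h\in\{1,2,3\}$ by exhibiting an explicit optimal solution and a matching dual certificate. The natural candidate solution is $\sigma_i=0$ for all $i$ and $\tau_i=1/i$ for all $i$ (here $\tau_j/j-\sigma_i/i=1/j^2\le 1/(ij)$ when $i\le j$, and $=1/(ij)-0$ is fine when $i\ge j$ since then $1/j\le 1/i\cdot(i/j)$... one must check $i\tau_j-j\sigma_i = i/j\le 1$ fails for $i>j$, so in fact for \LPh{} one needs $\tau_i\le 1/i$ and $\sigma_i$ chosen so that $i\tau_j-j\sigma_i\le 1$; taking $\tau_i=1/i,\sigma_i=0$ gives $i\tau_j = i/j$ which exceeds $1$ when $i>j$, so this is infeasible for \LPh{}). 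So for \LPh{} I would instead take the solution supported on the single coordinate $\tau_1=1,\sigma_1=0$ and all others zero, giving objective $\rho_1=1$; feasibility is immediate since $i\tau_1-1\cdot\sigma_i = i\cdot 0 - 0 = 0$... wait, $\tau_1=1$ forces $j=1$ constraint $i\tau_1 - 1\cdot\sigma_i = i - \sigma_i \le 1$, impossible for $i\ge 2$ unless $\sigma_i\ge i-1$, but then $\tau_i\ge\sigma_i\ge i-1$ and the constraint $1\cdot\tau_i - i\sigma_1 = \tau_i\le 1$ fails. The correct optimal solution for small $h$ is therefore the ``staircase'' $\tau_i=1/i$, $\sigma_i=0$, which is feasible for \LPh{} precisely when $i\tau_j-j\sigma_i=i/j\le 1$ for all $i\le j$ — true — and we must also check $i>j$: but the constraint is symmetric in neither, we need $i\tau_j - j\sigma_i\le 1$ AND $\ge -1$; with $\sigma_i=0$ the lower bound reads $i\tau_j = i/j\ge -1$, fine, and for the upper bound with $i>j$ we get $i/j>1$, infeasible. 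Hence for $h\ge 2$ one genuinely needs nonzero $\sigma$'s, and the right solution is more subtle; I would determine it by taking $\sigma_i,\tau_i$ proportional to $i$ on an initial segment. In any case, the small-$h$ verification is a finite computation: for each $h\le 3$ solve the LP explicitly (it has $\le 6$ variables) and record the optimal vertex and a dual solution of value $1$.

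The heart of the lemma is the case $h\ge 4$. Here I would use Lemma~\ref{lm:LPp} to pass to \LPph{} and exhibit a \emph{dual} feasible solution whose value is strictly below $1$; since it suffices to prove the bound for one value it is cleanest to prove it for $h=4$ and then argue monotonicity. The dual of \LPph{} has a variable $y_{ij}\ge 0$ for each pair $(i,j)$ (the constraint $\tau_j/j-\sigma_i/i\le 1/(ij)$) and the dual constraints are, for each coordinate: the $\tau_j$-column gives $\sum_i y_{ij}/j \ge \rho_j$, and the $\sigma_i$-column gives $\sum_j y_{ij}/i \ge \rho_i$ (with appropriate signs since $\sigma_i$ appears negatively in the objective and negatively in the constraint, so the sign works out to $-\sum_j y_{ij}/i \le -\rho_i$, i.e. $\sum_j y_{ij}/i\ge\rho_i$); the dual objective is $\sum_{i,j} y_{ij}/(ij)$, to be minimized. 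So the plan is: construct weights $y_{ij}$ supported on a small set of pairs (I expect pairs like $(1,1),(1,2),(2,1),(1,3),(1,4),\dots$ suffice) satisfying $\sum_i y_{ij}\ge j\rho_j$ and $\sum_j y_{ij}\ge i\rho_i$ for every $j$ and $i$ up to $h$, while making $\sum y_{ij}/(ij)$ strictly less than $1$. The numbers $\rho_1=1,\rho_2=1/2,\rho_3=2/3,\rho_4=1/2$ give row/column demands $1\rho_1=1$, $2\rho_2=1$, $3\rho_3=2$, $4\rho_4=2$; balancing these transportation-type demands while keeping the weighted cost $\sum y_{ij}/(ij)$ under $1$ is the crux. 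For general $h\ge 4$ one extends the $h=4$ dual solution by putting the extra demand on the cheap pair $(1,1)$ (or on pairs $(1,j)$ and $(i,1)$), and checks the total stays below $1$ — monotonicity of the optimum in $h$ then finishes it, OR one directly gives a dual solution for every $h$.

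The main obstacle I anticipate is the constraint-satisfaction bookkeeping for the $h\ge 4$ dual: one must simultaneously meet $h$ "row" inequalities and $h$ "column" inequalities with a single nonnegative weighting while controlling the objective, and the demands $i\rho_i$ grow while the per-unit costs $1/(ij)$ shrink, so it is not a priori obvious the optimum dips below $1$ — indeed it fails for $h\le 3$, so whatever argument is used must genuinely exploit $h\ge 4$, presumably the fact that $\rho_4=1/2$ is small relative to the "room" created by pairs $(1,4)$ and $(4,1)$ of cost $1/4$. Concretely, I expect to argue: in any feasible primal solution of \LPph{}, $\tau_i/i$ is bounded by $1/(ij)+\sigma_j/j\le$ (taking $j$ large) something forcing $\sum_i\rho_i(\tau_i-\sigma_i)<1$ by a weighted telescoping/averaging over the constraints $\tau_j/j\le 1/(ij)+\sigma_i/i$. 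If the explicit dual is painful, the fallback plan is the primal combinatorial argument: sum the constraints $\tau_j/j - \sigma_i/i\le 1/(ij)$ over a cleverly chosen multiset of $(i,j)$ with multiplicities so that the left side becomes a positive multiple of $\sum_i\rho_i(\tau_i-\sigma_i)$ and the right side sums to something strictly below that multiple when $h\ge 4$ but exactly equal when $h\le 3$ — this is exactly the LP-duality argument in disguise, so the two approaches coincide and whichever presentation is shorter is the one I would write up.
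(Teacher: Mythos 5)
Your plan starts from the right place — pass to \LPph{} via Lemma~\ref{lm:LPp} and then exhibit a feasible solution of the dual with objective value strictly below one — and this is exactly the paper's route. However, there are two concrete gaps that mean the argument as sketched would not go through.

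First, a sign error. In the dual of \LPph{}, the row associated with $\sigma_i$ gives $-\sum_j y_{ij}/i \ge -\rho_i$, i.e.\ $\sum_j y_{ij}\le i\rho_i=\phi(i)$, \emph{not} $\sum_j y_{ij}\ge\phi(i)$ as you wrote. The correct dual is a transportation-type problem: minimize $\sum_{i,j}y_{ij}/(ij)$ over $y\ge 0$ with row sums \emph{at most} $\phi(i)$ and column sums \emph{at least} $\phi(j)$. Because $\sum_i\phi(i)$ appears on both sides, any feasible $y$ must attain equality in every row and column, so the feasible region is genuinely a doubly-constrained polytope with prescribed marginals $\phi(1),\ldots,\phi(h)$. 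With your $\ge/\ge$ version the feasible set is strictly larger and a small-objective point there is not a certificate for the primal. This matters because the support structure you propose — pairs $(1,j)$ and $(i,1)$ — is incompatible with the true constraints: rows $3$ and $4$ alone would force $y_{31}+y_{41}\ge\phi(3)+\phi(4)=4$, already exceeding the budget $\phi(1)=1$ that column $1$ may receive. So the proposed support cannot yield a feasible dual point at all.

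Second, the heart of the proof — the explicit dual certificate — is missing, and is not something that bookkeeping alone produces. The paper's key idea is to take the zero-one matrix $A_h$ whose $(i,j)$ entry is $1$ exactly when $i+j\ge h+1$ and $\gcd(i,j)=1$. Its row and column sums are $\phi(i)$ (because among $i$ consecutive integers exactly $\phi(i)$ are coprime to $i$), so $A_h$ is feasible, and an induction on $h$ gives the telescoping identity $v_h^T A_h v_h=1$, where $v_h=(1,1/2,\ldots,1/h)$. That identity establishes the upper bound of one for every $h$ and is the reason the method yields equality for $h\le 3$. For $h\ge 4$ one then obtains strict inequality by a local $2\times 2$ swap near the corner: subtract one from the four positions $(h-2,h-1),(h-1,h-2),(h-1,h),(h,h-1)$, add one to $(h-2,h)$ and $(h,h-2)$, and add two to $(h-1,h-1)$; this preserves all marginals and reduces the cost by $2\bigl(\tfrac{1}{h-2}-\tfrac{1}{h-1}\bigr)\bigl(\tfrac{1}{h-1}-\tfrac{1}{h}\bigr)>0$. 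Without $A_h$ and this perturbation, there is no obvious certificate. Finally, the ``monotonicity in $h$'' you invoke as a fallback is not valid: the optimum value jumps from one down to something below one at $h=4$ but then increases again towards $3/\pi$ as $h\to\infty$, and neither restricting nor extending a feasible solution across $h$ is straightforward here (an optimal solution of \LPph{} at height $h+1$ restricts to a \emph{strictly} smaller objective at height $h$, and your ``put the extra demand on $(1,1)$'' extension again violates the column-$1$ budget $\phi(1)=1$). So the constructions for different $h$ must each be given directly, as the paper does.
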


\begin{proof}
We first show that the optimum value of \LPl{} is at least one if $\ell\in\{1,2,3\}$.
Indeed, the following feasible solutions of \LPl{}
\begin{align*}
\ell=1:\quad & \sigma_1=0,\, \tau_1=1 \\
\ell=2:\quad & \sigma_1=0,\, \tau_1=3/4,\, \sigma_2=1/2,\, \tau_2=1 \\
\ell=3:\quad & \sigma_1=0,\, \tau_1=5/9,\, \sigma_2=1/3,\, \tau_2=7/9,\, \sigma_3=2/3,\, \tau_3=1
\end{align*}
have objective value equal to $1$.
Hence, it remains to show that the optimum value of \LPl{} is at most one if $\ell\in\{1,2,3\}$ and
strictly smaller than one if $\ell\ge 4$.

Fix $\ell\in\NN$ and consider the dual of the linear program \LPpl{},
which will be denoted by \Al{}.
Recall that any feasible solution of the dual program \Al{} provides
an upper bound on the optimal value of the primal program \LPpl{} and
so the value of any feasible solution of \Al{} is an upper bound on the optimum value of \LPl{} by Lemma~\ref{lm:LPp}.

We now present the linear program \Al{} in a form suitable for our analysis.
Let $v_\ell\in\RR^\ell$ be the vector whose $i$-th coordinate is equal to $1/i$.
Also recall that Euler's totient function $\phi(k)$ is defined as follows:
$\phi(1)=1$ and $\phi(k)$ for $k\ge 2$      
is the number of positive integers smaller than $k$ that are coprime with $k$.
The optimization problem of \Al{} asks for minimizing $v_\ell^T Av_\ell$
over all \emph{non-negative} matrices $A\in\RR^{\ell\times \ell}$ such that
for each $i\in\{1,\ldots,\ell\}$,
the sum of the entries of the $i$-th row is bounded from above by Euler's totient function $\phi(i)$, and the sum of entries of the $i$-th column is bounded from below by $\phi(i)$, i.e.,
\[\sum_{j=1}^\ell{}A_{i,j} \le \phi(i) \qquad\mbox{and}\qquad \sum_{j=1}^\ell{}A_{j,i} \ge \phi(i).\]
\noindent Observe that the entries of the matrix $A$ are the variables of the dual of \LPpl{},
where $A_{i,j}$ is the variable associated with the constraint for $i$ and $j$ in the definition of \LPpl{}.

We illustrate the presentation of \Al{} by giving examples of two feasible solutions for $\ell=4$:
\[A=\begin{bmatrix} 0 & 0 & 0 & 1 \\ 0 & 0 & 1 & 0 \\ 0 & 1 & 0 & 1 \\ 1 & 0 & 1 & 0 \end{bmatrix}
  \qquad\mbox{and}\qquad
  A=\begin{bmatrix} 0 & 0 & 0 & 1 \\ 0 & 0 & 0 & 1 \\ 0 & 0 & 2 & 0 \\ 1 & 1 & 0 & 0 \end{bmatrix}.\]
Observe that the definition of \Al{} yields that
the sum of the entries of $A$ in the $i$-th row is at most $\phi(i)$ and
the sum of the entries of $A$ in the $i$-th column is at least $\phi(i)$.
This implies the sum of all entries of $A$ is equal to $\phi(1)+\cdots+\phi(h)$ and
so all inequalities in the definition of \Al{} must hold with equality.
It follows that we could assume without loss of generality that the matrix $A$ in \Al{} is symmetric as
if $A$ is a feasible solution of \Al{}, then $(A+A^T)/2$ is also a feasible solution with the same objective value.

We now find a feasible solution of \Al{} with objective value equal to one,
which will imply that the optimum value of \LPl{} is at most one.
We remark that the existence of this solution is implicitly established in the proof of~\cite[Proposition 4.4]{AouG23}.
Let $A_\ell\in\RR^{\ell\times \ell}$ be the zero-one matrix such that
the entry in the $i$-th row and $j$-th column is equal to one iff $i+j\ge \ell+1$ and $\gcd(i,j)=1$.
For example, the matrices $A_4$ and $A_5$ are the following:
\[A_4=\begin{bmatrix} 0 & 0 & 0 & 1 \\ 0 & 0 & 1 & 0 \\ 0 & 1 & 0 & 1 \\ 1 & 0 & 1 & 0 \end{bmatrix}\quad\mbox{and}\quad
  A_5=\begin{bmatrix} 0 & 0 & 0 & 0 & 1 \\ 0 & 0 & 0 & 0 & 1 \\ 0 & 0 & 0 & 1 & 1 \\ 0 & 0 & 1 & 0 & 1 \\ 1 & 1 & 1 & 1 & 0 \end{bmatrix}.
\]
Since $\gcd(i,j) = \gcd(i, j + ti)$ for any $t \in \ZZ$,
it follows that $\phi(i)$ is the number of integers coprime with $i$ in any sequence of $i$ consecutive integers,
which yields that
\[
\sum_{j=1}^\ell (A_\ell)_{i,j} = \sum_{j=1}^\ell (A_\ell)_{j,i} = \sum_{\substack{j = \ell + 1 - i\\ \gcd(i,j) = 1}}^\ell{1} = \phi(i)
\]
for all $i\in\{1,\ldots,\ell\}$.
We conclude that $A_\ell$ is a feasible solution of \Al{}.

\begin{figure}
\begin{center}
\epsfbox{crtorus-4.mps}
\end{center}
\caption{Visualization of the relation of the matrices $A_5$ and $A_6$ in the proof of Lemma~\ref{lm:LP}.}
\label{fig:A56}
\end{figure}

We will show by induction on $h$ that $v_\ell^TA_\ell{}v_\ell=1$ for every $\ell\in\NN$.
If $\ell=1$, the statement clearly holds.
Suppose that $\ell>1$ and that the statement holds for $\ell-1$.
Observe that the following holds for $i,j\in\{1,\ldots,\ell\}$ (also see Figure~\ref{fig:A56}):
\begin{itemize}
\item if $i<\ell$, $j<\ell$ and either $i+j\ge \ell + 1$ or $i + j < \ell$, then $(A_{\ell-1})_{i,j} = (A_{\ell})_{i,j}$,
\item if $i+j=\ell$ or $i = j = \ell$, then $(A_\ell)_{i,j} = 0$,
\item the diagonal entry $(A_{\ell-1})_{i,\ell-i}$ is equal to $(A_\ell)_{i,\ell}$ and $(A_\ell)_{\ell,i}$ for every $i\in\{1,\ldots,\ell-1\}$.
\end{itemize}
The last property follows from the identity $\gcd(i, \ell) = \gcd(i, \ell - i)$. 
For brevity, set $a_i := (A_\ell)_{i,\ell} = (A_\ell)_{\ell,i}$ for $i\in\{1,\ldots,\ell-1\}$.
Using the three properties stated above,
we obtain that
\begin{align*}
v_\ell^TA_\ell{}v_\ell & = v_{\ell-1}^TA_{\ell-1}v_{\ell-1}-\sum_{i=1}^{\ell-1}\frac{a_i}{i(\ell-i)}+\sum_{i=1}^{\ell-1}\frac{a_i}{i\ell}+\sum_{i=1}^{\ell-1}\frac{a_i}{(\ell-i)\ell} \\
            & = 1+\sum_{i=1}^{\ell-1}a_i\left(\frac{1}{i\ell}+\frac{1}{(\ell-i)\ell}-\frac{1}{i(\ell-i)}\right) \\
	    & = 1+\sum_{i=1}^{\ell-1}a_i\frac{\ell-i+i-\ell}{i(\ell-i)\ell}=1.
\end{align*}
We conclude that the objective value of the feasible solution $A_\ell$ of \Al{} is at most one.

For $\ell\ge 4$, we show that $A_\ell$ can be perturbed to a feasible solution of \Al{} with objective value strictly smaller than one.
Observe that if $\ell\ge 4$,
then the entries $(A_\ell)_{\ell-2,\ell-1}$, $(A_\ell)_{\ell-1,\ell-2}$, $(A_\ell)_{\ell-1,\ell}$ and $(A_\ell)_{\ell,\ell-1}$ are equal to one.
Let $A'_\ell$ be the matrix obtained from $A_\ell$
by subtracting one from each of these four entries,
adding one to the entries $(A_\ell)_{\ell,\ell-2}, (A_\ell)_{\ell-2,\ell}$, and
adding two to the entry $(A_\ell)_{\ell-1,\ell-1}$.
For example, if $\ell=7$, we have the following:
\[A_7=\begin{bmatrix}
      0 & 0 & 0 & 0 & 0 & 0 & 1 \\
      0 & 0 & 0 & 0 & 0 & 0 & 1 \\
      0 & 0 & 0 & 0 & 1 & 0 & 1 \\
      0 & 0 & 0 & 0 & 1 & 0 & 1 \\
      0 & 0 & 1 & 1 & 0 & 1 & 1 \\
      0 & 0 & 0 & 0 & 1 & 0 & 1 \\
      1 & 1 & 1 & 1 & 1 & 1 & 0
      \end{bmatrix}
  \qquad\mbox{and}\qquad
  A'_7=\begin{bmatrix}
      0 & 0 & 0 & 0 & 0 & 0 & 1 \\
      0 & 0 & 0 & 0 & 0 & 0 & 1 \\
      0 & 0 & 0 & 0 & 1 & 0 & 1 \\
      0 & 0 & 0 & 0 & 1 & 0 & 1 \\
      0 & 0 & 1 & 1 & 0 & 0 & 2 \\
      0 & 0 & 0 & 0 & 0 & 2 & 0 \\
      1 & 1 & 1 & 1 & 2 & 0 & 0
      \end{bmatrix}
  .\]
Since the row and column sums of the matrices $A_\ell$ and $A'_\ell$ are the same,
the matrix $A'_\ell$ is a feasible solution of \Al{}.
The objective value of $A'_\ell$ is equal to
\begin{align*}
v_\ell^TA'_\ell{}v_\ell & = v_\ell^TA_\ell{}v_\ell-\frac{2}{(\ell-2)(\ell-1)}-\frac{2}{(\ell-1)\ell}+\frac{2}{(\ell-2)\ell}+\frac{2}{(\ell-1)^2}\\
&  = 1 - 2 \left( \frac{1}{\ell-2} - \frac{1}{\ell-1}  \right) \left( \frac{1}{\ell-1} - \frac{1}{\ell}  \right) <1.
\end{align*}
It follows that $A'_\ell$ is a feasible solution of \Al{} with objective value strictly smaller than one.
\end{proof}

\subsection{Upper bound on the height}

Since $\gamma_{\ell}<1$ for every $\ell\ge 4$ by Lemma~\ref{lm:LP},
Lemma~\ref{lm:size} asserts that $k$-nice sets with a fixed height of four or more has size less than $k$
when $k$ is sufficiently large.
We next combine Lemmas~\ref{lm:density}, \ref{lm:sqroot} and \ref{lm:size} to show that for all $k \geq 3225$,
there exist maximum size $k$-nice set with height at most three.

\begin{theorem}
\label{thm:k0}
For every $k\ge 3225$,
there exists a $k$-nice set of maximum size that has height at most three.
\end{theorem}

\begin{proof}
Let $Q$ be a $k$-nice set of maximum size.
Let $h_0$ be the height of $Q$; by Lemma~\ref{lm:sqroot},
we may assume that $h_0 \leq \sqrt{2k}$. 
If $Q$ has $k+2$ elements,
then there is nothing to prove since the set $\{(1,0),(0,1),(1,1),\ldots,(k,1)\}$
is a $k$-nice set with $k+2$ elements with height one.
Hence, we will assume that $Q$ has at least $k+3$ elements.

We now show that if $h_0 \ge 4$, then $|Q| < k+3$, a contradiction. To do so, we need to distinguish three cases based on the size of $h_0$.
\begin{itemize}
\item {\bf Case $h_0\ge 41020$.} 
      It is straightforward to verify that
      \[\frac{3264\pi}{10255}\cdot \frac{h_0^2}{2}+\frac{4946}{3675}\cdot h_0+1<\frac{h_0^2}{2}+3.\]
      In particular, since $k\ge h_0^2/2$ and $\frac{3264\pi}{10255}<1$, we obtain using Lemma~\ref{lm:density} that
      \[ |Q| \leq \frac{3264\pi}{10255}\cdot k+\frac{4946}{3675}\cdot h_0+1<k+3.\]
\item {\bf Case $h_0\in\{81,\ldots,41019\}$.}
      We have verified with computer assistance that
      \[\gamma_h\cdot\frac{h^2}{2}+\beta_h<\frac{h^2}{2}+3\]
      for every $h\in\{81,\ldots,41019\}$.
      Since $k\ge h_0^2/2$ and $\gamma_{h_0}<1$ (by Lemma~\ref{lm:LP}), we obtain using Lemma~\ref{lm:size} that
      \[|Q| \leq \gamma_{h_0} k+\beta_{h_0}\le \gamma_{h_0}\frac{h_0^2}{2}+\left(k-\frac{h_0^2}{2}\right)+\beta_{h_0}<k+3.\]
\item {\bf Case $h_0\in\{4,\ldots,80\}$.}
      We have verified with computer assistance that $3225\gamma_h+\beta_h<3225+3$ for every $h\in\{4,\ldots,80\}$.
      Since $\gamma_{h_0}<1$ (by Lemma~\ref{lm:LP}), we obtain using Lemma~\ref{lm:size} that
      \[
      |Q| \leq \gamma_{h_0} k+\beta_{h_0}\le\gamma_{h_0}3225+(k-3225)+\beta_{h_0}< k+3.
      \]
\end{itemize}
It follows that the height $h_0$ of the set $Q$ is at most three.
\end{proof}

We remark that the bound of $3225$ in Theorem~\ref{thm:k0} is the best possible in the setting of the proof:
consider $k=3224$, $h=80\le\sqrt{2k}$ and note that $\gamma_h k+\beta_h\approx 3227.039$, so Lemma~\ref{lm:size} does not rule out the existence of a $k$-nice set of size $k+3=3227$ and height $h=80$.

Our next aim is to improve the bound in Lemma~\ref{lm:sqroot} when $k \leq 3224$,
which will eventually lead to an improved version of Theorem~\ref{thm:k0}.
We do so with computer assistance employing Algorithm~\ref{alg:height},
which is analyzed in the next lemma.

\begin{algorithm}
{\bf Input:} positive integers $k\in\NN$ and $h\in\NN$ such that $2\le h\le k$\\
{\bf Output:} \emph{verified} or \emph{not verified}\\
\vskip 2ex
\nlset{$x_0$-loop}
\For{$x_0$ = $1$ \KwTo $\lfloor h/2\rfloor$}{
  \lIf{$\gcd(x_0,h)\not=1$}{\Continue}
  \lIf{$h\le\frac{k-x_0}{h}$}{\Return \emph{not verified}}
  \For{$y$ = $1$ \KwTo $h$}{
    \nlset{$x$-loop}
    \For{$x$ = $h$ \KwTo $\left\lfloor\frac{x_0 y+k}{h}\right\rfloor$}{
      \nlset{Continue 0}
      \lIf{$\gcd(x,y)\not=1$}{\Continue}
      $z$ = $\min\left\{y,x-y\right\}$\;
      \nlset{Continue 1}
      \lIf{$z+\frac{k}{x}<h$}{\Continue}
      $w$ = $1$\;
      \For{$y'$ = $1$ \KwTo $h$}{
        \For{$x'$ = $\left\lceil\frac{y'(x_0-h)-k}{h}\right\rceil$ \KwTo $\left\lfloor\frac{y'(x_0-h)+k}{h}\right\rfloor$}{
          \lIf{$\gcd(x',y')\not=1$}{\Continue}
	  \lIf{$|x'y-(x-y)y'|>k$}{\Continue}
          \lIf{$|x'|>w$}{$w$ = $|x'|$}
	  }
        }
      \nlset{Continue 2}
      \lIf{$w<h$}{\Continue}
      \Return \emph{not verified}\;
      }
    }
  }
\Return \emph{verified}\;
\vskip 2ex
\caption{Algorithm that verifies the following:
         every $k$-nice set with height $h$
	 is equivalent to a $k$-nice set with height less than $h$.}
\label{alg:height}
\end{algorithm}

\begin{lemma}
\label{lm:alg:height}
If Algorithm~\ref{alg:height} for input $k\in\NN$ and $h\in\NN$, $2\le h\le k$, returns \emph{verified},
then the following statement is true:
every $k$-nice set with height $h$
is equivalent to a $k$-nice set with height less than $h$.
\end{lemma}

\begin{proof}
Fix $k\in\NN$ and $h\in\NN$, $h\le k$, such that Algorithm~\ref{alg:height} returned \emph{verified}.
Let $Q$ be a $k$-nice set with height $h$.
By negating a subset of the elements of $Q$, we may assume that $Q$ is $y$-non-negative.
Since the height of $Q$ is $h$, there exists a point $(x_0,h)\in Q$.
By considering the set $A^mQ$ for some $m\in\ZZ$ instead of $Q$,
where $A$ is the matrix
\[A=\begin{bmatrix} 1 & 1 \\ 0 & 1 \end{bmatrix},\]
we may assume that $|x_0|\le h/2$; by applying  symmetry along the axis $x=0$ to $Q$ if needed, we can impose $0 < x_0 \le h/2$. 

First, observe that any point $(x,y)\in Q$ with $x<0$ and $y\ge 1$ satisfies
\[|x|\le \frac{k}{h}-\frac{x_0}{h}\cdot y\le \frac{k}{h}-\frac{x_0}{h}=\frac{k-x_0}{h}<h,\]
where the last inequality holds since the main \texttt{$x_0$-loop} in Algorithm~\ref{alg:height}
did not return \emph{not verified} when testing $h\le\frac{k-x_0}{h}$.
If the set $Q$ contains no point $(x,y)\in Q$ with $x\ge h$,
then the width of $Q$ is less than $h$ and so the set $A'Q$,
where $A'$ is the matrix
\[A'=\begin{bmatrix} 0 & 1 \\ -1 & 0 \end{bmatrix},\]
is a set equivalent to $Q$ with height at most $h-1$.
Hence, the set $Q$ contains a point $(x,y)\in Q$ with $x\ge h$, and
choose such a~point so that $x$ is as large as possible.
Note that the point $(x,y)$ is also a point with the largest first coordinate in the absolute value (as
the first coordinate of all points with a negative first coordinate is greater than $-h$).

Since Algorithm~\ref{alg:height} did not return \emph{not verified},
the \texttt{$x$-loop} for $(x,y)$ executed either \texttt{Continue 0}, \texttt{Continue 1} or \texttt{Continue 2}.
As $(x,y)\in Q$, we must have $\gcd(x,y)=1$ and
so either \texttt{Continue 1} or \texttt{Continue 2} was executed in the \texttt{$x$-loop}.

We first analyze the case where \texttt{Continue 1} was executed in the \texttt{$x$-loop}.
Consider the set $Q'$ obtained from $A'Q$ by negating all points with negative second coordinate.
Note that the set $Q'$ contains the point $(-y,x)$ and $y \le h \le x$.
The choice of the point $(x,y)$ implies that
$(-y,x)$ is a point with the largest second coordinate in $Q'$.
We distinguish two cases: $y\le x-y$ and $y>x-y$.
\begin{itemize}
\item If $y\le x-y$, then it holds that $y+\frac{k}{x}<h$ (because \texttt{Continue 1} was executed).
      Since the set $Q'$ is $k$-nice,
      the first coordinate of any point $(x',y')$ contained in $Q'$ has absolute value at most
      \[|x'|\le \frac{y}{x}y'+\frac{k}{x}\le y+\frac{k}{x}<h.\]
      It follows that the height of the set $A'Q'$, which is equivalent to the original set $Q$,
      is less than $h$.
\item If $y>x-y$, we consider the set $AQ'$, which contains the point $(x-y,x)$.
      As \texttt{Continue 1} was executed and $x-y<y$, it holds that $(x-y)+\frac{k}{x}<h$.
      Since the set $AQ'$ is $k$-nice,
      the first  coordinate of any point $(x',y')$ contained in $AQ'$ has absolute value at most
      \[|x'|\le \frac{x-y}{x}y'+\frac{k}{x}\le x-y+\frac{k}{x}<h,\]
      and we conclude that the height of the set $A'AQ'$, which is equivalent to the original set $Q$,
      is less than $h$.
\end{itemize}
The analysis of the case where \texttt{Continue 1} was executed is now finished.

We next analyze the case where \texttt{Continue 2} was executed in the \texttt{$x$-loop}. Since
\[A^{-1}=\begin{bmatrix} 1 & -1 \\ 0 & 1 \end{bmatrix},\]
observe that the set $A^{-1}Q$ contains the points $(x_0-h,h)$ and $(x-y,y)$, so that any point $(x',y')\in A^{-1}Q$ satisfies $\gcd(x',y')=1$,
\[\frac{y'(x_0-h)-k}{h}\le x'\le\frac{y'(x_0-h)+k}{h}\quad\mbox{and}\quad
  |x'y-(x-y)y'|\le k.\]
Since \texttt{Continue 2} was executed in the \texttt{$x$-loop},
it follows that $|x'|<h$ for all points $(x',y')\in A^{-1}Q$,
i.e., the width of $A^{-1}Q$ is at most $h-1$ and
so the height of the set $A'A^{-1}Q$ is at most $h-1$.
\end{proof}

We are now ready to improve Lemma~\ref{lm:sqroot} for $k\in\{2,\ldots,3224\}$.
We remark that the multiplicative constant $\sqrt{4/3}$ obtained in the following lemma
cannot be improved in general as there exists a $3$-nice set with $6$ elements and height $2$,
but every $3$-nice set with height one has at most $5$ elements.

\begin{lemma}
\label{lm:sqroot-alg}
Let $k\in\{2,\ldots,3224\}$.
For every $k$-nice set $Q$,
there exists a $k$-nice set equivalent to $Q$ that has height at most $\sqrt{4k/3}$.
\end{lemma}

\begin{proof}
We executed Algorithm~\ref{alg:height} for all $k\in\{2,\ldots,3224\}$ and
all $h\in\NN$ such that $\sqrt{4k/3}<h\le\sqrt{2k}$, and
Algorithm~\ref{alg:height} always returned \emph{verified} (the implementation of Algorithm~\ref{alg:height}
is available as an ancillary file named \verb|Algorithm1.c| with the arXiv version of this manuscript).

Consider a $k$-nice set $Q$ and
let $Q_0$ be a set equivalent to $Q$ with the smallest possible height $h$.
By Lemma~\ref{lm:sqroot}, $h\le\sqrt{2k}$.
If $h>\sqrt{4k/3}$, we obtain a contradiction by Lemma~\ref{lm:alg:height} applied with $k$ and $h$.
It follows that $h\le\sqrt{4k/3}$.
\end{proof}

We are now ready to prove the main theorem of this section.

\begin{theorem}
\label{thm:k0new}
For every $k\ge 1892$,
there exists a $k$-nice set of maximum size that has height at most three.
\end{theorem}

\begin{proof}
Let $Q$ be a $k$-nice set of maximum size.
We can assume that $|Q|\ge k+3$;
if $|Q|=k+2$, then the set $\{(1,0),(0,1),(1,1),\ldots,(k,1)\}$ is an example of a $k$-nice set with height one. If $k\ge 3225$, the statement of the theorem follows from Theorem~\ref{thm:k0}, so we henceforth assume that $k\in\{1892,\ldots,3224\}$. By Lemma~\ref{lm:sqroot-alg}, we may assume that the height $h_0$ of $Q$ is at most $\sqrt{4k/3}$, so that $h_0\le 66$.

As in the proof of Theorem~\ref{thm:k0},
we now distinguish two cases based on the size of $h_0$ in order to show that if $h_0 \ge 4$, then $|Q| < k+3$, a contradiction.
\begin{itemize}
\item {\bf Case $h_0\in\{51,\ldots,66\}$.}
      We have verified with computer assistance that
      \[\gamma_h\cdot\frac{3h^2}{4}+\beta_h<\frac{3h^2}{4}+3\]
      for every $h\in\{51,\ldots,66\}$.
      Since $k\ge 3h_0^2/4$ and $\gamma_{h_0}<1$ (by Lemma~\ref{lm:LP}),
      we obtain using Lemma~\ref{lm:size} that
      the size of $Q$ is at most $\gamma_{h_0} k+\beta_{h_0}<k+3$.
\item {\bf Case $h_0\in\{4,\ldots,50\}$.}
      We have verified with computer assistance that $1892\gamma_h+\beta_h<1892+3$ for every $h\in\{4,\ldots,50\}$.
      Since $\gamma_{h_0}<1$ (by Lemma~\ref{lm:LP}), we obtain using Lemma~\ref{lm:size} that
      the size of $Q$ is at most $\gamma_{h_0} k+\beta_{h_0}<k+3$.
\end{itemize}
It follows that $h_0\le 3$, which completes the proof of the theorem.
\end{proof}

Again, the bound of $1892$ in Theorem~\ref{thm:k0new} is best possible in the setting of the proof, since for $k=1891$ and $h=50\le\sqrt{4k/3}$, we obtain that $\gamma_h k+\beta_h\approx 1894.036$.

\section{Nice sets with height at most three}
\label{sec:three}

This section is devoted to proving the following lemma,
which determines the maximum size of a $k$-nice set with height at most three.

\begin{lemma}
\label{lm:height123}
For every $k\ge 3$,
the maximum size of a $k$-nice set of height at most~$3$ is
\begin{itemize}
\item $k+4$ if $k \bmod 6 = 2$,
\item $k+3$ if $k \bmod 6\in\{1,3,5\}$, and
\item $k+2$ otherwise.
\end{itemize}
\end{lemma}

\begin{proof}
Fix $k\ge 3$ and let $N_{k,h}$ be the maximum size of a $k$-nice $y$-non-negative set with height $h\in\{1,2,3\}$.
The statement of the lemma follows from the following four claims, which we prove next (note that
every $k$-nice set is equivalent to a $y$-non-negative set with the same height).

\begin{claim} \label{smallh1}
$N_{k,1} = k+2$.
\end{claim}
\begin{claim} \label{smallh2}
$N_{k,2} \leq k+3$, with equality if and only if $k \equiv 1 \bmod 2$.
\end{claim}
\begin{claim} \label{smallh3}
 $N_{k,3} \leq k+4$, with equality if and only if $k \equiv 2 \bmod 6$.
\end{claim}
\begin{claim} \label{smallh4}
$N_{k,3} \leq k+2$ if $k \equiv 0 \bmod 6$ or $k \equiv 4 \bmod 6$.
\end{claim}

In what follows, when $Q$ is a $k$-nice $y$-non-negative set with height $h$ and $i\in\{0,\ldots,h\}$,
we let $Q_i = \{x : (x,i) \in Q\}$ and,
if $Q_i$ is non-empty, we let $s_i$ and $t_i$ denote the smallest and largest elements of $Q_i$, respectively.  
Note that $|Q_0|\le 1$ as $Q_0$ may only contain either $1$ or $-1$.

\begin{proof}[Proof of Claim~\ref{smallh1}]
Consider a $k$-nice $y$-non-negative set $Q$ with height one.
Since the set $Q$ is $k$-nice, $(s_1,1)\in Q$ and $(t_1,1)\in Q$,
we obtain that $t_1 - s_1 \leq k$ and so $|Q_1| \leq k+1$.
It follows that $|Q| = |Q_0| + |Q_1| \leq k+2$.
This implies that $N_{k,1}\le k+2$.
The bound is attained by the set $Q = \{(1,0)\} \cup \{(x,1) : 0 \leq x \leq k\}$.
\end{proof}

\begin{proof}[Proof of Claim~\ref{smallh2}]
Consider a $k$-nice $y$-non-negative set $Q$ with height two.
If $Q_1$ is empty, we obtain $2t_2-2s_2\le k$ using that $Q$ is $k$-nice, $(s_2,2)\in Q$ and $(t_2,2)\in Q$.
So, the size of $Q_2$ is at most $k/2+1$ and so the size of $Q$ is at most $|Q_0|+|Q_2|\le 1+k/2+1$, which yields that $|Q|\le k+2$.
Hence, we can assume that $Q_1$ is non-empty and so $Q$ contains the points $(s_1, 1)$ and $(t_1, 1)$.

Since the set $Q$ is $k$-nice,
we get that $t_2 - 2 s_1 \leq k$ and $2t_1 - s_2 \leq k$,
which yields that $s_1 \geq \left \lceil \frac{t_2 - k}{2}\right \rceil$ and $t_1 \leq \left \lfloor \frac{s_2 + k}{2}\right \rfloor$. 
It follows that
\[|Q_1| \leq t_1 - s_1 + 1 \leq \left \lfloor \frac{s_2 + k}{2}\right \rfloor - \left \lceil \frac{t_2 - k}{2}\right \rceil + 1.\]
Since the set $Q_2$ consists of odd numbers only,
it follows that $Q_2$ has at most $\frac{t_2 - s_2}{2} + 1$ elements. We obtain using $|Q_0| \leq 1$ that 
\begin{equation*} \label{eq:h2}
|Q| \leq \left \lfloor \frac{s_2 + k}{2}\right \rfloor - \left \lceil \frac{t_2 - k}{2}\right \rceil + \frac{t_2 - s_2}{2} + 3 
\leq k + 3.
\end{equation*}
It follows that the size of $Q$ is at most $k + 3$, with equality only if the above estimate is tight, which can only occur when $s_2-k$ is divisible by two, i.e., when $k\equiv 1 \bmod 2$ (recall that $s_2$ is odd).
This establishes that $N_{k,2}\le k+3$ if $k\equiv 1 \bmod 2$, and $N_{k,2}\le k+2$ otherwise.
Moreover, if $k \equiv 1 \bmod 2$, then the bound in Claim~\ref{smallh2} is tight
as the set $Q = \{(1,0), (k,2)\} \cup \{(x,1): 0 \leq x \leq k\}$ is $k$-nice.
\end{proof}

\begin{proof}[Proof of Claim~\ref{smallh3}]
Consider a $k$-nice $y$-non-negative inclusion-wise maximal set $Q$ with height three.
Observe that since $Q_3$ consists only of numbers that are not divisible by 3,
we can compute the size of $Q_3$ as
\begin{equation}
|Q_3| = \frac{2}{3}\left(t_3 - s_3\right) + 1 + \rho,
\label{eq:Q3}
\end{equation}
where the constant $\rho$ is (recall that neither $s_3$ nor $t_3$ is divisible by three)
\begin{equation*}
\rho = \begin{cases}
0 &\text{ if } t_3 \equiv s_3 \bmod 3,\\
1/3 &\text{ if } s_3 \equiv 1\bmod 3\text{ and } t_3 \equiv 2 \bmod 3,\text{ and}\\
-1/3 &\text{ if } s_3 \equiv 2\bmod 3\text{ and } t_3 \equiv 1 \bmod 3.
\end{cases}
\end{equation*} 

We first argue that $|Q| \leq k+2$ unless $Q_1$ and $Q_2$ are both non-empty.
Since the set $Q$ is $k$-nice,
it holds that $3t_3-3s_3 \leq k$ and $2t_2 - 2s_2 \leq k$ (assuming $Q_2$ is non-empty and so $s_2$ and $t_2$ are defined),
which implies that
$|Q_3| \leq \frac 23 (t_3 - s_3) + 1 + \rho \leq \frac 29k + \frac 43$ and
$|Q_2| \leq \frac 12(t_2 - s_2) + 1 \leq \frac k4 + 1$ (note that this estimate also holds if $Q_2$ is empty).
Therefore,
if $Q_1$ is empty,
we obtain (using $k\ge 3$) that
the set $Q$ has at most $|Q_0| + |Q_2| + |Q_3| \le \frac {17}{36}k + \frac{10}{3} \le k + 2$ elements.
We now consider the case that $Q_1$ is non-empty and $Q_2$ is empty.
If $|Q_1|+|Q_3|\le 4$, then the size of $Q$ is at most $|Q_0|+|Q_1|+|Q_3|\le 5\le k+2$.
If $|Q_1|+|Q_3|\ge 5$, then the convex hull of $Q$ contains a~line segment of length at least $1.5$ with endpoints of the form $((s_1+s_3)/2, 2)$ and $((t_1+t_3)/2, 2)$, 
so the convex hull contains two points of the form $(m,2)$, $(m+1,2)$ for some integer $m$. In particular, since $Q$ is inclusion-wise maximal, either $(m,2)$ or $(m+1,2)$ should belong to $Q$, which contradicts $Q_2$ being empty.

In the rest of the proof of the claim, we assume that both $Q_1$ and $Q_2$ are non-empty.
Since the set $Q$ is $k$-nice, the following inequalities hold:
\[
t_3 - 3s_1 \leq k, \quad
3t_1 - s_3 \leq k,\quad
2t_3 - 3s_2 \leq k\quad\text{and}\quad
3t_2 - 2 s_3 \leq k.
\]
We now define the following shorthand notation:
\begin{align*}
a_1 & = \left \lceil \frac{t_3 - k}{3} \right \rceil -  \frac{t_3 - k}{3},&
b_1 & = \frac{s_3 + k}{3} - \left \lfloor \frac{s_3 + k}{3} \right \rfloor,\\ 
a_2 & = \left \lceil \frac{2t_3 - k}{3} \right \rceil - \frac{2t_3 - k}{3}\qquad\text{and}&
b_2 & = \frac{2s_3 + k}{3} - \left \lfloor \frac{2s_3 + k}{3} \right \rfloor.
\end{align*}
Note that all the four quantities $a_1$, $b_1$, $a_2$ and $b_2$ are non-negative.
Moreover, since $t_3$ is not divisible by three, $a_1$ and $a_2$ cannot both be equal to zero, and
since $s_3$ is not divisible by three, $b_1$ and $b_2$ cannot both be equal to zero.
In other words, $a_1$ or $a_2$ is at least $1/3$ and $b_1$ or $b_2$ is at least $1/3$.
We now obtain the following estimates on $s_1$, $t_1$, $s_2$ and $t_2$:
\begin{align}
s_1 & \geq \frac{t_3 - k}{3} + a_1, &
t_1 & \leq \frac{s_3 + k}{3} - b_1, \nonumber\\
s_2 & \geq \frac{2t_3 - k}{3} + a_2 \qquad\text{and}&
t_2 & \leq \frac{2s_3 + k}{3} - b_2. \label{eq:s2t2}
\end{align}
Using these four estimates, we obtain that
\begin{align*}
|Q_1| &= t_1 - s_1 + 1 \leq \frac{s_3 + k}{3} - \frac{t_3 - k}{3} + 1 - a_1 - b_1,\\
|Q_2| &= \frac{t_2 - s_2}{2} + 1 \leq \frac{1}{2}\left( \frac{2s_3 + k}{3} -  \frac{2t_3 - k}{3} - a_2 - b_2 \right) + 1,
\end{align*}
These two estimates on the sizes of $Q_1$ and $Q_2$ and the estimate \eqref{eq:Q3} now yields that
\begin{equation}
|Q| = |Q_0| + |Q_1| + |Q_2| + |Q_3| 
\leq k + 4 - \left( a_1 + b_1 + \frac{a_2}{2} + \frac{b_2}{2} \right)+\rho,
\label{eq:finalQ}
\end{equation}
which implies that the size of $Q$ is at most $k+4$ (recall that $\rho\le 1/3$).

Suppose that $|Q| = k + 4$.
Recall that $a_1$ or $a_2$ is at least $1/3$ and $b_1$ or $b_2$ is at least $1/3$.
Therefore, the right side of \eqref{eq:finalQ} is smaller than $k+4$
unless $a_1=b_1=0$, $a_2=b_2=1/3$, $\rho=1/3$ and all four inequalities in \eqref{eq:s2t2} are tight.
It follows from the definition of $\rho$ that $s_3 \equiv 1 \bmod 3$ and $t_3 \equiv 2 \bmod 3$.
We next derive that $k \equiv t_3 \equiv 2\bmod 3$ (as $a_1=0$ and so $k=t_3+3s_1$) and
$k \equiv s_2+1 \equiv 0 \bmod 2$ (as $a_2=1/3$ and so $k=2t_3-3s_2+1$; recall that $s_2$ is odd).
We conclude that if $|Q| = k + 4$, then $k \equiv 2 \bmod 6$. 

On the other hand,
if $k \equiv 2 \bmod 6$,
we can construct a $k$-nice set $Q$ with height three and $k+4$ elements as follows.
Let $s$ be the smallest integer larger than $\frac{2}{3}k$ satisfying $s \equiv 1 \bmod 3$, and
consider the $k$-nice set
\begin{align*}
Q = & \{(1,0)\} \cup \left\{(0,1), \ldots, \left(\frac{k+s}{3},1\right) \right\} \cup\\
    & \left\{\left(\frac{k+1}{3},2\right), \ldots, \left(\frac{k+2s-1}{3}, 2\right) \right\} \cup \{(s,3), \ldots, (k,3)\},
\end{align*}
which has $1+\frac{k+s}{3}+1+\frac{k+2s-1-(k+1)}{6}+1+\frac{2(k-s+2)}{3}=k+4$ elements.
\end{proof}

\begin{proof}[Proof of Claim~\ref{smallh4}]
Consider a $k$-nice $y$-non-negative inclusion-wise maximal set $Q$ with height three.
If $Q_1$ or $Q_2$ is empty, then $Q$ has at most $k+2$ elements as argued in the proof of Claim~\ref{smallh3}.
We will need a refined version of \eqref{eq:finalQ}.
Define $c_2$ as
\[
c_2 = \begin{cases}
1 &\text{ if } \left \lfloor \frac{2s_3 + k}{3} \right \rfloor \text{is even, and}\\
0 &\text{ otherwise.}
\end{cases}
\]
Since $t_2$ is odd, we can strengthen the estimate \eqref{eq:s2t2} on $t_2$ to
\[t_2 \leq \frac{2s_3 + k}{3} - b_2 - c_2,\]
which leads to the following stronger version of \eqref{eq:finalQ}:
\begin{equation}
|Q| \leq k + 4 - \left( a_1 + b_1 + \frac{a_2}{2} + \frac{b_2}{2} + \frac{c_2}{2} \right)+\rho.
\label{eq:finalQrefined}
\end{equation}
The values of the quantities $a_1$, $b_1$, $a_2$, $b_2$, $c_2$ and $\rho$
for $k\equiv 0 \bmod 6$ and $k\equiv 4\bmod 6$ and
all possible values $s_3\not\equiv 0\bmod 3$ and $t_3\not\equiv 0\bmod 3$
can be found in Table~\ref{tab:values},
where $S=a_1 + b_1 + \frac{a_2}{2} + \frac{b_2}{2} + \frac{c_2}{2}$.
Since the value of $S-\rho=a_1 + b_1 + \frac{a_2}{2} + \frac{b_2}{2} + \frac{c_2}{2} - \rho$ in each of the cases
is larger than one, it follows that the size of $Q$ is less than $k+3$, i.e., it is most $k+2$.
\begin{table}
\begin{center}
\begin{tabular}{|c|c|c | c | c | c | c | c | c | c |}
\hline
$k \bmod 6$ & $s_3 \bmod 6$ & $t_3 \bmod 3$ & $a_1$ & $b_1$ & $a_2$ & $b_2$ & $c_2$ & $S$ & $\rho$ \\
\hline
 0 & 1 or 4 & 1 & $2/3$ & $1/3$ & $1/3$ & $2/3$ & 1 & $2$ & 0 \\
 0 & 1 or 4 & 2 & $1/3$ & $1/3$ & $2/3$ & $2/3$ & 1 & $11/6$ & $+1/3$ \\
 0 & 2 or 5 & 1 & $2/3$ & $2/3$ & $1/3$ & $1/3$ & 0 & $5/3$ & $-1/3$ \\
 0 & 2 or 5 & 2 & $1/3$ & $2/3$ & $2/3$ & $1/3$ & 0 & $3/2$ & 0 \\
\hline 
 4 & 1 or 4 & 1 & 0 & $2/3$ & $2/3$ & 0 & 1 & $3/2$ & 0 \\
 4 & 1 or 4 & 2 & $2/3$ & $2/3$ & 0 & 0 & 1 & $11/6$ & $+1/3$ \\
 4 & 2 or 5 & 1 & 0 & 0 & $2/3$ & $2/3$ & 1 & $7/6$ & $-1/3$ \\
 4 & 2 or 5 & 2 & $2/3$ & 0 & 0 & $2/3$ & 1 & $3/2$ & 0 \\
\hline
\end{tabular}
\end{center}
\caption{The values of the quantities $a_1$, $b_1$, $a_2$, $b_2$, $c_2$,
         $S=a_1 + b_1 + \frac{a_2}{2} + \frac{b_2}{2} + \frac{c_2}{2}$ and $\rho$
         in the cases considered in the proof of Claim~\ref{smallh4}.}
\label{tab:values}
\end{table}
\end{proof}
We now combine the claims to complete the proof of Lemma~\ref{lm:height123}.
Since a $k$-nice $y$-non-negative set with height 0 can only have at most 1 element,
we get $N_k = \max\{ N_{k,h} : h \in \{1, 2, 3\} \}$ and
the desired result then follows from Claims \ref{smallh1}, \ref{smallh2}, \ref{smallh3}, and \ref{smallh4}.
\end{proof}

\section{Algorithm}
\label{sec:algorithm}

In this section, we present our algorithm for computing the maximum size of a $k$-nice set,
which we use for computing the maximum size of a $k$-nice set for $k\in\{3,\ldots,1891\}$.
Before we do so, we need to establish the following lemma,
which concerns the structure of a $k$-nice set that we may assume in the algorithm.

\begin{lemma}
\label{lm:xynon}
Let $Q\subseteq\ZZ^2$ be an inclusion-wise maximal $k$-nice set with height $h$, $1\le h\le k$.
There exists an equivalent $k$-nice set $Q'$ with height $h$ such that
\begin{itemize}
\item $(1,0)\in Q'$, $(0,1)\in Q'$ and $(1,1)\in Q'$, and
\item $Q'\subseteq \{0,\ldots,k\}\times\{0,\ldots,h\}$.
\end{itemize}
\end{lemma}

\begin{proof}
Consider an inclusion-wise maximal $k$-nice set $Q$ with height $h$, $h\le k$.
We may assume that $Q$ is $y$-non-negative by negating a subset of its elements.
Since the height of $Q$ is at most $k$ and $Q$ is an inclusion-wise maximal $k$-nice set,
the set $Q$ contains one of the points $(1,0)$ or $(-1,0)$;
by negating the point if needed, we may assume that $(1,0)\in Q$.

Since the set $Q$ is $y$-non-negative, there exists $m\in\ZZ$ such that
the set $A^mQ$ is $x$-non-negative,
where $A$ is the matrix
\[A=\begin{bmatrix} 1 & 1 \\ 0 & 1 \end{bmatrix};\]
choose the smallest (possibly negative) $m\in\ZZ$ with this property.
Note that $m$ is well-defined since the set $Q$ contains at least one element with positive second coordinate (otherwise,
$Q$ would not be an inclusion-wise maximal $k$-nice set).

Set $Q'=A^mQ$.
Since $Q$ is an inclusion-wise maximal $k$-nice set,
the set $Q'$ is also an inclusion-wise maximal $k$-nice set.
The choice of $m$ implies that there exists $(x_0,y_0)\in Q'$ such that $x_0-y_0<0$ (otherwise,
the set $A^{m-1}$ would also be $x$-non-negative).
We claim that the first coordinate of any point in $Q'$ is at most $k$.
Suppose that $Q'$ contains a point $(x,y)$ such that $x>k$;
note that $y$ is at most $k$ as the height of $Q$ is at most $k$.
It follows (note that $y\le k<x$ and $x_0<y_0$) that
\[|x_0y-y_0x|=xy_0-x_0y\ge x(x_0+1)-x_0k=x+x_0(x-k)\ge x>k,\]
which is impossible since the set $Q'$ is $k$-nice.
We conclude that $Q'\subseteq\{0,\ldots,k\}\times\{0,\ldots,h\}$.
Finally, since the set $Q'$ is an inclusion-wise maximal $k$-nice set,
it contains the point $(0,1)$ (because $|x|\le k$ for every $(x,y)\in Q'$) and
the point $(1,1)$ (because $|y-x|\le k$ for every $(x,y)\in Q'$).
\end{proof}

We are now ready to present and analyze the recursive algorithm for computing
the maximum size of a $k$-nice set with given height.

\begin{algorithm}
\SetKwProg{Proc}{procedure}{}{end}
\renewcommand{\ProgSty}[1]{\texttt{#1}}
\Proc{compute($k$,$h$,$N$)}{
  $L[1]$ = 0; $U[1]$ = $k$\;
  \For{$i$ = $2$ \KwTo $h$}{
    $L[i]$ = 1; $U[i]$ = $k$;
    }
  \Return \texttt{backtrack}($k$,$h$,$N$,$h$,$0$,$L[1\bdots h]$,$U[1\bdots h]$)\;  
  }
\vskip 2ex
\caption{The procedure \texttt{compute}, which uses the recursive procedure \texttt{backtrack}
         given as Algorithm~\ref{alg:backtrack}.}
\label{alg:compute}
\end{algorithm}

\begin{algorithm}
\SetKwProg{Proc}{procedure}{}{end}
\renewcommand{\ProgSty}[1]{\texttt{#1}}
\vskip 2ex
\Proc{backtrack($k$,$h$,$N$,$\ell$,$N_{>}$,$L[1\bdots\ell]$,$U[1\bdots\ell]$)}{
  $M[i][a][b]$ = $|\{z, a\le z\le b \mbox{ and } \gcd(z,i)=1\}|$\;
  $M_k[i][a][b]$ = $\max\limits_{a\le a'\le b'\le b, |b'-a'|\le k/i} M[i][a'][b']$\;
  \nlset{Update N}
  \lIf{$\ell=0$}{\Return $N_{>}+1$}
  \If{$\ell<h$}{
    $N'$=$N_{>}+1$\;
    \For{$i$ = $1$ \KwTo $\ell-1$}{
      \lIf{$L[i]\le U[i]$}{$N'$ = $N'+M_k[i][L[i]][U[i]]$}
      }
    \lIf{$N'>N$}{\nlset{Call 1}$N$=\texttt{backtrack}($k$,$h$,$N$,$\ell-1$,$N_{>}$,$L[1\bdots\ell-1]$,$U[1\bdots\ell-1]$)}
    }
  \lIf{$L[\ell]>U[\ell]$}{\Return $N$}
  \For{$a$ = $L[\ell]$ \KwTo $U[\ell]$}{
    \For{$b$ = $a$ \KwTo $U[\ell]$}{
      \lIf{$\gcd(a,\ell)\not=1$ \textnormal{\textbf{or}} $\gcd(b,\ell)\not=1$}{\Continue}
      \lIf{$(b-a)\ell>k$}{\Continue}
      $N'$=$N_{>}+M[\ell][a][b]+1$\;
      \For{$i$ = $1$ \KwTo $\ell-1$}{
        $L'[i]$ = $\max\left\{L[i],\left\lceil\frac{ai-k}{\ell}\right\rceil,\left\lceil\frac{bi-k}{\ell}\right\rceil\right\}$\;
	$U'[i]$ = $\min\left\{U[i],\left\lfloor\frac{ai+k}{\ell}\right\rfloor,\left\lfloor\frac{bi+k}{\ell}\right\rfloor\right\}$\;
        \lIf{$L'[i]\le U'[i]$}{$N'$ = $N'+M_k[i][L'[i]][U'[i]]$}
        }
      \lIf{$N'\le N$}{\Continue}
      $N_{\ge}$ = $N_{>}+M[\ell][a][b]$\;
      \nlset{Call 2}
      $N$ = \texttt{backtrack}($k$,$h$,$N$,$\ell-1$,$N_{\ge}$,$L'[1\bdots\ell-1]$,$U'[1\bdots\ell-1]$)\;
      }
    }
  \Return $N$\;
  }
\vskip 2ex
\caption{The recursive procedure \texttt{backtrack}.}
\label{alg:backtrack}
\end{algorithm}

\begin{lemma}
\label{lm:main_correct}
Let $k\in\NN$ and $h\in\NN$ such that $2\le h\le k$.
For every $N\in\NN$,
the procedure \texttt{compute(k,h,N)} given as Algorithm~\ref{alg:compute} returns 
\begin{itemize}
\item the maximum size of a $k$-nice set with height $h$
      if there exists a $k$-nice set with height $h$ with more than $N$ elements, and
\item the value $N$, otherwise.
\end{itemize}
\end{lemma}

\begin{proof}
We analyze the code of the procedure \texttt{compute(k,h,N)} given as Algorithm~\ref{alg:compute} and
the recursive procedure \texttt{backtrack} given as Algorithm~\ref{alg:backtrack}.
Note that the recursive calls within the procedure \texttt{backtrack} are made only
on the lines marked as \texttt{Call 1} and \texttt{Call 2}.
We observe that the parameters $k$ and $h$ during the recursive calls of the procedure \texttt{backtrack} never change, and
the parameter $\ell$ always decreases by one.
In addition, the main body of the procedure is executed only if $\ell>0$.
We fix $k\in\NN$ and $h\in\NN$, $2\le h\le k$, for the rest of the proof.

We now prove by induction on the value of $\ell$ that
the return value of the procedure \texttt{backtrack} is always at least the parameter $N$ that it was called with;
moreover, the value of $N$ never decreases during the execution of the procedure \texttt{backtrack}.
If $\ell=0$, the procedure just returns $N_{>}+1$ on the line marked \texttt{Update N}.
The procedure \texttt{backtrack} was called from the instance for $\ell=1$
either at the line marked \texttt{Call 1} or at the line marked \texttt{Call 2}.
In the former case, it held that $N'=N_{>}+1$ is larger than $N$ (note the \texttt{if} condition just before \texttt{Call 1}) and
so the return value of the instance of the procedure for $\ell=0$ is larger than $N$.
In the latter case, it held that $N'=N_{>}+M[\ell][a][b]+1$ is larger than $N$ (note the \texttt{if} condition before \texttt{Call 2}) and
since the instance of the procedure for $\ell=0$ was called with $N_{\ge}=N_{>}+M[\ell][a][b]$,
its return value is larger than $N$.
If $\ell>0$, the value of $N$ is only affected by recursive calls of the procedure \texttt{backtrack},
however, they never return a smaller value of $N$ than the one that they were called with by induction.
We conclude that the return value of the procedure \texttt{backtrack} is always at least the parameter $N$.
Moreover, if the return value is larger,
then the return command on the line marked \texttt{Update N} was executed during the recursion and
the return value is actually equal to the largest value ever returned on the line marked \texttt{Update N}.

Consider an instance of the procedure \texttt{backtrack} for $\ell_0$;
note that the instance is at depth $h-\ell_0$ of the recursion.
Let $I_0$ be the values of $\ell$ in the instances that made a recursive call on the line marked \texttt{Call 1} and
let $a_i$ and $b_i$ be the values of the variables $a$ and $b$ when
a recursive call was made on the line marked \texttt{Call 2} for the value of $\ell$ equal to $i\in\{\ell_0+1,\ldots,h\}\setminus I_0$.
In particular, if $\ell_0=h$, then $I_0=\emptyset$ and no $a_i$'s and $b_i$'s are defined.
Note that $h\not\in I_0$.
Also, note that $(b_i-a_i)i\le k$ for every $i\in\{\ell_0+1,\ldots,h\}\setminus I_0$ and
it holds that
\[N_{>}=\sum_{j\in\{\ell_0+1,\ldots,h\}\setminus I_0}M[j][a_j][b_j].\]
Observe that the values of the array $L$ and $U$ satisfy the following for every $i\in\{1,\ldots,\ell_0\}$:
\begin{align*}
L[i]&=\max\left\{1,
          \max_{j\in\{\ell_0+1,\ldots,h\}\setminus I_0}\left\lceil\frac{ia_j-k}{j}\right\rceil,
          \max_{j\in\{\ell_0+1,\ldots,h\}\setminus I_0}\left\lceil\frac{ib_j-k}{j}\right\rceil\right\}\\
U[i]&=\min\left\{k,
          \min_{j\in\{\ell_0+1,\ldots,h\}\setminus I_0}\left\lfloor\frac{ia_j+k}{j}\right\rfloor,
          \min_{j\in\{\ell_0+1,\ldots,h\}\setminus I_0}\left\lfloor\frac{ib_j+k}{j}\right\rfloor\right\}\\
\end{align*}
with the exception of $L[1]$, which is equal to
\[L[1]=\max\left\{0,
          \max_{j\in\{\ell_0+1,\ldots,h\}\setminus I_0}\left\lceil\frac{a_j-k}{j}\right\rceil,
          \max_{j\in\{\ell_0+1,\ldots,h\}\setminus I_0}\left\lceil\frac{b_j-k}{j}\right\rceil\right\}=0.\]
In particular, if the instance of the procedure \texttt{backtrack} for $\ell_0$
makes a recursive call on the line marked \texttt{Call 2},
the inequality $L[\ell_0]\le a\le b\le U[\ell_0]$ implies that
then the values of $a$ and $b$ at that point satisfy
\[\left|aj-a_j\ell_0\right|\le k,\quad
  \left|bj-a_j\ell_0\right|\le k,\quad
  \left|aj-b_j\ell_0\right|\le k \quad\mbox{and}\quad
  \left|bj-b_j\ell_0\right|\le k\]
for every $j\in\{\ell_0+1,\ldots,h\}\setminus I_0$.
Since the same was true in the instances of the procedure \texttt{backtrack} for $\ell\in\{\ell_0+1,\ldots,h\}\setminus I_0$,
it follows that
\begin{equation}
  \left|a_jj'-a_{j'}j\right|\le k,\quad
  \left|b_jj'-b_{j'}j\right|\le k \quad\mbox{and}\quad
  \left|a_jj'-b_{j'}j\right|\le k
\label{eq:abj}  
\end{equation}  
for all $j,j'\in\{\ell_0+1,\ldots,h\}\setminus I_0$.

Suppose that $\ell_0=0$ and consider the following set $Q\subseteq\ZZ^2$:
\[Q=\{(1,0)\}\cup\bigcup_{i\in\{1,\ldots,h\}\setminus I_0}\{z : a_i\le z\le b_i\mbox{ and }\gcd(z,i)=1\}.\]
Since $h\le k$, $|b_i-a_i|\le k/i$ for all $i\in\{1,\ldots,h\}\setminus I_0$, and
\eqref{eq:abj} holds for all $j,j'\in\{1,\ldots,h\}\setminus I_0$,
the set $Q$ is a $k$-nice set.
Note that $a_i\in Q$ and $b_i\in Q$ for every $i\in\{1,\ldots,h\}\setminus I_0$
because $\gcd(a_i,i)=1$, $\gcd(b_i,i)=1$ and $a_i\le b_i$.
The size of $Q$ is equal to
\[1+\sum_{i\in\{1,\ldots,h\}\setminus I_0}M[i][a_i][b_i]=1+N_{>},\]
and so the return value of the procedure \texttt{backtrack} is actually the size of the $k$-nice set $Q$.
We conclude that if the procedure \texttt{backtrack} returns a value $n$ larger than the parameter $N$ that it was called with,
then there exists a $k$-nice set with height $h$ (note that $h\not\in I_0$ and $(a_h,h)\in Q$) that has size $n$.
It follows that if the procedure \texttt{compute(k,h,N)} returns a value $n$ that is larger than $N$,
then there exists a $k$-nice set with height $h$ that has size $n$.

To complete the proof of the lemma,
we need to show that if there exists a $k$-nice set with height $h$ that has $n>N$ elements,
then the return value of the procedure \texttt{compute(k,h,N)} is at least $n$.
Fix a $k$-nice set $Q$ with height $h$ of maximum possible size $n$ and assume that $n>N$.
By Lemma~\ref{lm:xynon},
we may assume that $(1,0)\in Q$, $(0,1)\in Q$ and $Q\subseteq\{0,\ldots,k\}\times\{0,\ldots,h\}$.
Let $I_0$ be the set of those $y\in\{1,\ldots,h\}$ that are not the second coordinate of any point in $Q$, and
for every $i\in\{1,\ldots,h\} \setminus I_0$,
define $a_i$ and $b_i$ to be the minimum and maximum first coordinate of the points in $Q$ with their second coordinate equal to $i$,
respectively.
We inspect the sequence of recursive calls made on the line marked \texttt{Call 1} for $\ell\in I_0$ and
on the line marked \texttt{Call 2} in the loop for $a=a_{\ell}$ and $b=b_{\ell}$ for $\ell\in\{1,\ldots,h\}\setminus I_0$.
Consider the instance for $\ell_0$ in this sequence of the recursive calls.
Note that
the invariants that we observed earlier imply that
$L[i]\le a_i$ and $b_i\le U[i]$ for any $i\in\{1,\ldots,\ell_0\}\setminus I_0$, and
\[N_{>}=\sum_{i\in\{\ell_0+1,\ldots,h\}\setminus I_0}M[i][a_i][b_i]=\left|Q\cap\{0,\ldots,k\}\times\{\ell_0+1,\ldots,h\}\right|.\]
In particular, if $\ell_0=0$,
then the procedure \texttt{backtrack} returns the size of the set $|Q|=n$ on the line marked \texttt{Update N}.

We now consider the case $\ell_0\in I_0$ (note that $h\not\in I_0$ as the height of $Q$ is $h$).
Note that the value of $N'$ in the \texttt{if} condition just before the line marked \texttt{Call 1} is
\begin{align*}
N' & = N_{>}+1+\sum_{i\in\{1,\ldots,\ell_0-1\},L[i]\le U[i]}M_k[i][L[i]][U[i]] \\
   & \ge N_{>}+1+\sum_{i\in\{1,\ldots,\ell_0\}\setminus I_0}M_k[i][L[i]][U[i]] \\
   & \ge N_{>}+1+\sum_{i\in\{1,\ldots,\ell_0\}\setminus I_0}M[i][a_i][b_i] = |Q|.
\end{align*}
Note that we are using $L[i]\le a_i\le b_i\le U[i]$ and $|b_i-a_i|\le k/i$ for $i\in\{1,\ldots,\ell_0\}\setminus I_0$.
In particular, if the recursive call on the line marked \texttt{Call 1} is not made,
then the value of $N$ is already at least $n=|Q|$ and
so the procedure \texttt{compute(k,h,N)} eventually returns a value that is at least $n$.

We next consider the case $\ell_0\in\{1,\ldots,h\}\setminus I_0$ and
consider the iteration of the \texttt{for} cycles for $a=a_{\ell_0}$ and $b=b_{\ell_0}$.
This iteration reaches at the least the \texttt{if} condition above the line marked \texttt{Call 2}
because $L[\ell_0]\le a_{\ell_0}\le b_{\ell_0}\le U[\ell_0]$,
$\gcd(a_{\ell_0},\ell_0)=1$, $\gcd(b_{\ell_0},\ell_0)=1$ and $\left|b_{\ell_0}-a_{\ell_0}\right|\le k/\ell_0$.
Since $L'[i]\le a_i\le b_i\le U'[i]$ for any $i\in\{1,\ldots,\ell_0-1\}\setminus I_0$,
we obtain that the value of $N'$ in the \texttt{if} condition above the line marked \texttt{Call 2} is
\begin{align*}
N' & = N_{>}+1+M[\ell_0][a_{\ell_0}][b_{\ell_0}]+\sum_{i\in\{1,\ldots,\ell_0-1\},L'[i]\le U'[i]}M_k[i][L'[i]][U'[i]]\\
   & \ge N_{>}+1+M[\ell_0][a_{\ell_0}][b_{\ell_0}]+\sum_{i\in\{1,\ldots,\ell_0-1\}\setminus I_0}M_k[i][L'[i]][U'[i]] \\
   & \ge N_{>}+1+M[\ell_0][a_{\ell_0}][b_{\ell_0}]+\sum_{i\in\{1,\ldots,\ell_0-1\}\setminus I_0}M[i][a_i][b_i] = |Q|.
\end{align*}
In particular, if the recursive call on the line marked \texttt{Call 2} is not made,
then the value of $N$ is already at least $n=|Q|$ and
so the procedure \texttt{compute(k,h,N)} eventually returns a value that is at least $n$.

We conclude that
if the whole sequence of recursive calls consisting of those to be made on the line marked \texttt{Call 1} for $\ell\in I_0$ and
on the line marked \texttt{Call 2} in the loop for $a=a_{\ell}$ and $b=b_{\ell}$ for $\ell\in\{1,\ldots,h\}\setminus I_0$
is not made, then the value of $N$ was already at least $n$.
If the whole sequence of the recursive calls is made,
it reaches the line marked \texttt{Update N} in the call with $\ell=0$ and
the procedure \texttt{backtrack} returns $n$.
In either case, the procedure \texttt{compute(k,h,N)} returns a value that is at least $n$.
Since we have shown that if there exists a $k$-nice set with height $h$ that has $n>N$ elements,
then the return value of the procedure \texttt{compute(k,h,N)} is at least $n$,
the proof of the lemma is now complete.
\end{proof}

We are now ready to determine the maximum size of a $k$-nice set for every $k\in\{3,\ldots,1891\}$.

\begin{theorem}
\label{thm:main_compute}
Let $K_0$ be the set containing the $59$ integers listed in Table~\ref{tab:main}.
For every $k\in\{3,\ldots,1891\}\setminus K_0$, the maximum size of $k$-nice set is
\begin{itemize}
\item $k+4$ if $k \bmod 6 = 2$,
\item $k+3$ if $ k \bmod 6\in\{1,3,5\}$, and
\item $k+2$, otherwise.
\end{itemize}
If $k\in K_0\cap\{3,\ldots,1891\}$, then the maximum size of a $k$-nice set
is the value $N(\TT^2,k)$ given in Table~\ref{tab:main}.
\end{theorem}

\begin{proof}
Fix $k\in\{3,\ldots,1891\}$, and let $N$ be the integer defined as
\begin{itemize}
\item $N=k+4$ if $k \bmod 6 = 2$,
\item $N=k+3$ if $ k \bmod 6\in\{1,3,5\}$, and
\item $N=k+2$, otherwise.
\end{itemize}
Note that Lemma~\ref{lm:height123} yields that there exists a $k$-nice set of size $N$.
For every $h\in\{2,\ldots,\lfloor\sqrt{4k/3}\rfloor\}$ such that
Algorithm~\ref{alg:height} returned \emph{not verified}
we executed procedure \texttt{compute(k,h,N)} given as Algorithm~\ref{alg:compute};
the implementation of Algorithm~\ref{alg:compute} is available as
an ancillary file named \verb|Algorithm2.c| with the arXiv version of this manuscript.
Lemmas~\ref{lm:sqroot-alg} and \ref{lm:main_correct} yield that
the maximum of the numbers returned by the procedures is the maximum size of a $k$-nice set.
If $k\not\in K_0$, all procedures returned $N$, and
if $k\in K_0$, the maximum number returned by the procedures
is the value of $N(\TT^2,k)$ in Table~\ref{tab:main}.
\end{proof}

\section{Main result}
\label{sec:main}

We are now ready to prove our main result.
Before we do so, for completeness,
we present an argument on the maximum size of a $1$-nice set and the maximum size of a $2$-nice set.

\begin{proposition}
\label{prop:k12}
The maximum size of a $1$-nice set is $3$ and the maximum size of a $2$-nice is $4$.
\end{proposition}

\begin{proof}
Fix $k\in\{1,2\}$.
Let $Q$ be a $k$-nice set with maximum size and let $h$ the height of $Q$.
By Lemma~\ref{lm:sqroot}, we may assume that $h\le\sqrt{2k}$, i.e.~$h\le k$ (note that $h$ is an integer).
Finally, by Lemma~\ref{lm:xynon},
we may also assume that $(1,0)\in Q$, $(0,1)\in Q$ and $(1,1)\in Q$, and $Q\subseteq \{0,\ldots,k\}^2$.
Hence, if $k=1$, the size of $Q$ is at most $3$ and
the $1$-nice set $\{(1,0), (0,1), (1,1)\}$ witnesses that this bound is tight.
If $k=2$, observe that the set $Q$ cannot contain both $(2,1)$ and $(1,2)$,
which implies that the size of $Q$ is at most $4$.
The $2$-nice set $\{(1,0), (0,1), (1,1), (2,1)\}$ witnesses that this bound is tight.
\end{proof}

We next prove our main result, which is equivalent to Theorem~\ref{thm:main}.

\begin{theorem}
\label{thm:main1}
Let $N_k$ for $k\in\NN$ be the maximum size of a $k$-nice set and
let $K_0$ be the set containing the $59$ integers listed in Table~\ref{tab:main}.
For every $k\in\NN\setminus K _0$, 
it holds that
\[
N_k = \begin{cases}
k+4 &\text{ if } k \bmod 6 = 2,\\
k+3 &\text{ if } k \bmod 6\in\{1,3,5\}, \mbox{and}\\
k+2 &\text{ otherwise}.
\end{cases}
\]
The values of $N_k$ for $k\in K_0$ is the value $N(\TT^2,k)$ given in Table~\ref{tab:main}.
\end{theorem}

\begin{proof}
The values of $N_1$ and $N_2$ are determined by Proposition~\ref{prop:k12} and
the values of $N_k$ for $k\in\{3,\ldots,1891\}$ are determined in Theorem~\ref{thm:main_compute}.
If $k\ge 1892$,
Theorem~\ref{thm:k0new} implies that there exist a maximum size $k$-nice set that has height at most three, and
so the values of $N_k$ for $k\ge 1892$ are determined by Lemma~\ref{lm:height123}.
\end{proof}

Since the proof of Theorem~\ref{thm:main1} is computer assisted,
we next give a weaker version of the theorem, which can be proven without computer assistance.
Note that Theorem~\ref{thm:main2} implies that the maximum size of a $k$-nice set is $k+O(1)$.
In the proof of Theorem~\ref{thm:main2}, we use Lemma~\ref{lm:density}
where we used computer assistance to verify \eqref{eq:sum210},
which requires checking 210 specific inequalities.
In our view, this part of the proof can be verified in a human (although quite tedious) way;
it is also possible to establish Lemma~\ref{lm:density} with the constant $\frac{4946}{3675}$ replaced with $210$
without any estimates requiring a tedious verification as
$\rho_{\ell}\ge 1$ and $\alpha_{\ell}\le 209$ for $\ell\in\{1,\ldots,210\}$.

\begin{theorem}
\label{thm:main2}
Let $N_k$ for $k\in\NN$ be the maximum size of a $k$-nice set.
There exists $k_0$ such that 
it holds for every $k\ge k_0$ that
\[
N_k = \begin{cases}
k+4 &\text{ if } k \bmod 6 = 2,\\
k+3 &\text{ if } k \bmod 6\in\{1,3,5\}, \mbox{and}\\
k+2 &\text{ otherwise}.
\end{cases}
\]
\end{theorem}

\begin{proof}
Let $h_0=41020$, $C=\frac{3264\pi}{10255}$ and $B=\frac{4946}{3675}$.
Note that Lemma~\ref{lm:density} implies that every $k$-nice set with height $h\ge h_0$
has size at most $Ck+Bh+1$.
Further, let
\[\Gamma=\max\{\gamma_4,\ldots,\gamma_{h_0}\}.\]
Note that Lemma~\ref{lm:LP} implies that $\Gamma\in (0,1)$.
Define $k_0$ to be
\[k_0=\max\left\{\frac{2B^2}{(1-C)^2},\frac{\beta_{h_0}-1}{1-\Gamma}\right\}.\]

Fix $k\ge k_0$ and consider a $k$-nice set $Q\subseteq\ZZ^2$ of maximum size.
Note that $|Q|\ge k+2$ as
the set $\{(1,0),(0,1),(1,1),\ldots,(k,1)\}$ is a $k$-nice set of size $k+2$.
By Lemma~\ref{lm:sqroot},
we can assume that $Q$ is $y$-non-negative with height $h\le\sqrt{2k}$.
If $h\ge h_0$,
then Lemma~\ref{lm:density} yields that the size of $Q$ is at most
\[Ck+Bh+1\le Ck+B\sqrt{2k}+1
         \le Ck+\frac{B\sqrt{2}}{\sqrt{k_0}}k+1
	 =Ck+(1-C)k+1=k+1.\]
If $4 \leq h \leq h_0$,
then Lemma~\ref{lm:size} yields that the size of $Q$ is at most
\[\gamma_h k+\beta_h\le \Gamma k+\beta_{h_0}
                    \le \Gamma k+(1-\Gamma)k_0+1\le k+1.\]
Since the size of set $Q$ is at least $k+2$, its height $h$ is at most three.
Hence, the size of $Q$ is determined by Lemma~\ref{lm:height123}.
\end{proof}

\section{Conclusion}
\label{sec:concl}

As we have shown,
there are only four values of $k\in\NN$ such that there is a $k$-nice set of size $k+6$:
these are $k=24$, $k=48$, $k=120$ and $k=168$.
We discuss the structure of the sets for these four values of $k$.
The $24$-nice set $A_{24}\subseteq\ZZ^2$ of size $30$
is the following,
\begin{align*}
A_{24}= \{ &
      (1,0),  
      (0,1), (1,1), (2,1), (3,1), (4,1), (5,1), (6,1), (7,1), (8,1), (9,1),\\
      &
      (5,2), (7,2), (9,2), (11,2), (13,2),
      (10,3), (11,3), (13,3), (14,3), (16,3),\\
      & (17,3),
      (15,4), (17,4), (19,4), (21,4),
      (21,5), (22,5), (23,5), (24,5)
      \},
\end{align*}
which can also be described as
\[A_{24}=\{(1,0)\}\cup\bigcup_{i=1,\ldots,5}\{(z,i)  :  5i-5\le z\le 4i+5  \text{ and } \gcd(z,i) = 1\}.\]
The $48$-nice set $A_{48}\subseteq\ZZ^2$ of size $54$,
$120$-nice set $A_{120}\subseteq\ZZ^2$ of size $126$ 
the $168$-nice set $A_{168}\subseteq\ZZ^2$ of size $174$ are as follows:
\begin{align*}
A_{48} & =\{(1,0)\}\cup\bigcup_{i=1,\ldots,7}\{(z,i)  :  7i-7\le z\le 6i+7 \text{ and } \gcd(z,i) = 1\},\\
A_{120} & =\{(1,0)\}\cup\bigcup_{i=1,\ldots,11}\{(z,i)  :  11i-11\le z\le 10i+11 \text{ and } \gcd(z,i) = 1\},\\
A_{168} & =\{(1,0)\}\cup\bigcup_{i=1,\ldots,13}\{(z,i)  :  13i-13\le z\le 12i+13 \text{ and } \gcd(z,i) = 1\}.
\end{align*}
The sets $A_{24}$, $A_{48}$ and $A_{120}$ are visualized in Figures~\ref{fig:k24}, \ref{fig:k48} and~\ref{fig:k120}.

\begin{figure}
\begin{center}
\epsfbox{crtorus-1.mps}
\end{center}
\caption{Visualization of a $24$-nice set with 30 elements presented in Section~\ref{sec:concl}.}
\label{fig:k24}
\end{figure}

\begin{figure}
\begin{center}
\epsfbox{crtorus-3.mps}
\end{center}
\caption{Visualization of a $48$-nice set with 54 elements presented in Section~\ref{sec:concl}.}
\label{fig:k48}
\end{figure}

\begin{figure}
\begin{center}
\epsfbox{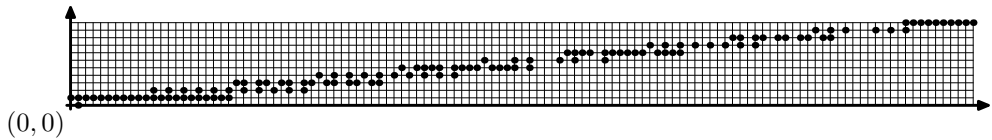}
\end{center}
\caption{Visualization of a $120$-nice set with 126 elements presented in Section~\ref{sec:concl}.}
\label{fig:k120}
\end{figure}

\section*{Acknowledgment}

The authors would like to thank Jordan Ellenberg for insightful comments on the results presented in this paper and their extensions,
particularly for bringing the manuscript~\cite{KriS25} to their attention.

\bibliographystyle{bibstyle}
\bibliography{crtorus}

\begin{thebibliography}{10}
\providecommand{\url}[1]{\texttt{#1}}
\providecommand{\urlprefix}{URL }
\providecommand{\eprint}[2][]{\url{#2}}

\bibitem{Ago00}
I.~Agol: \emph{Bounds on exceptional {D}ehn filling}, Geometry \& Topology
  \textbf{4} (2000), 431--449.

\bibitem{Aou14}
T.~Aougab: \emph{Constructing large k-systems on surfaces}, Topology and its
  Applications \textbf{176} (2014), 1--9.

\bibitem{Aou17}
T.~Aougab: \emph{Curves intersecting exactly once and their dual cube
  complexes}, Groups, Geometry and Dynamics \textbf{11} (2017), 1061--1101.

\bibitem{Aou18}
T.~Aougab: \emph{Local geometry of the k-curve graph}, Transactions of the
  American Mathematical Society \textbf{370} (2018), 2657--2678.

\bibitem{AouBG19}
T.~Aougab, I.~Biringer and J.~Gaster: \emph{Packing curves on surfaces with few
  intersections}, International Mathematics Research Notices \textbf{2019}
  (2017), 5205--5217.

\bibitem{AouG23}
T.~Aougab and J.~Gaster: \emph{Curves on the torus intersecting at most k
  times}, Mathematical Proceedings of the Cambridge Philosophical Society
  \textbf{174} (2023), 569--584.

\bibitem{ArtEGOVW16}
S.~Artmann, F.~Eisenbrand, C.~Glanzer, T.~Oertel, S.~Vempala and R.~Weismantel:
  \emph{A note on non-degenerate integer programs with small sub-determinants},
  Operations Research Letters \textbf{44} (2016), 635--639.

\bibitem{ArtWZ17}
S.~Artmann, R.~Weismantel and R.~Zenklusen: \emph{A strongly polynomial
  algorithm for bimodular integer linear programming}, Proceedings of the 49th
  Annual ACM SIGACT Symposium on Theory of Computing (STOC) (2017), 1206--1219.

\bibitem{AveS24}
G.~Averkov and M.~Schymura: \emph{On the maximal number of columns of a
  $\delta$-modular integer matrix: bounds and computations}, Mathematical
  Programming \textbf{206} (2024), 61--89.

\bibitem{BakGL15}
K.~L. Baker, C.~Gordon and J.~Luecke: \emph{Bridge number, heegaard genus and
  non-integral dehn surgery}, Transactions of the American Mathematical Society
  \textbf{367} (2015), 5753--5830.

\bibitem{BakHP01}
R.~C. Baker, G.~Harman and J.~Pintz: \emph{The difference between consecutive
  primes, {II}}, Proceedings of the London Mathematical Society \textbf{83}
  (2001), 532--562.

\bibitem{BalFKKS24v1}
I.~Balla, M.~Filakovsk\'y, B.~Kielak, D.~Kr\'al' and N.~Schlomberg:
  \emph{Curves on the torus with few intersections},
  \eprint{arXiv:2412:18002v1}.

\bibitem{BonSEHN14}
N.~Bonifas, M.~D. Summa, F.~Eisenbrand, N.~H{\"a}hnle and M.~Niemeier: \emph{On
  sub-determinants and the diameter of polyhedra}, Discrete \& Computational
  Geometry \textbf{52} (2014), 102--115.

\bibitem{Cra21}
H.~Cram{\'e}r: \emph{Some theorems concerning prime numbers}, Arkiv f{\"o}r
  Matematik, Astronomi och Fysik \textbf{15} (1921), 33.

\bibitem{Ell25-blog}
J.~Ellenberg: \emph{Another example: sets of arcs and loops with bounded
  intersection} (December 2025), \eprint{Quomodocumque blog}.

\bibitem{FioJWY22}
S.~Fiorini, G.~Joret, S.~Weltge and Y.~Yuditsky: \emph{nteger programs with
  bounded subdeterminants and two nonzeros per row}, Proceedings of the 62nd
  Annual Symposium on Foundations of Computer Science (FOCS) (2022), 13--24.

\bibitem{Gra95}
A.~Granville: \emph{Harald {C}ram{\'e}r and the distribution of prime numbers},
  Scandinavian Actuarial Journal \textbf{1995} (1995), 12--28.

\bibitem{Gre18}
J.~E. Greene: \emph{On curves intersecting at most once, {II}}, arXiv preprint
  arXiv:1811.01413  (2018).

\bibitem{Gre19}
J.~E. Greene: \emph{On loops intersecting at most once}, Geometric and
  Functional Analysis \textbf{29} (2019), 1828--1843.

\bibitem{GriMPV18}
D.~V. Gribanov, D.~S. Malyshev, P.~M. Pardalos and S.~I. Veselov:
  \emph{{FPT}-algorithms for some problems related to integer programming},
  Journal of Combinatorial Optimization \textbf{35} (2018), 1128--1146.

\bibitem{JiaB22}
H.~Jiang and A.~Basu: \emph{Enumerating integer points in polytopes with
  bounded subdeterminants}, SIAM Journal on Discrete Mathematics \textbf{36}
  (2022), 449--460.

\bibitem{JuvMM96}
M.~Juvan, A.~Malni{\v c} and B.~Mohar: \emph{Systems of curves on surfaces},
  Journal of Combinatorial Theory, Series B \textbf{68} (1996), 7--22.

\bibitem{Kar72}
R.~M. Karp: \emph{Reducibility among combinatorial problems}, in: R.~E. Miller,
  J.~W. Thatcher and J.~D. Bohlinger (eds.), Complexity of Computer
  Computations, The IBM Research Symposia Series (1972), 85--103.

\bibitem{KriKS25}
B.~Kriepke, G.~M. Kyureghyan and M.~Schymura: \emph{On the size of integer
  programs with bounded non-vanishing subdeterminants}, Journal of
  Combinatorial Theory, Series A \textbf{212} (2025), 106003.

\bibitem{KriS25}
B.~Kriepke and M.~Schymura: \emph{On generic {$\Delta$}-modular integer
  matrices with two rows}, \eprint{arXiv:2502.15394}.

\bibitem{MalRT14}
J.~Malestein, I.~Rivin and L.~Theran: \emph{Topological designs}, Geometriae
  Dedicata \textbf{168} (2014), 221--233.

\bibitem{OliHP14}
T.~Oliveira~e Silva, S.~Herzog and S.~Pardi: \emph{Empirical verification of
  the even {G}oldbach conjecture and computation of prime gaps up to $4\cdot
  10^{18}$}, Mathematics of Computation \textbf{83} (2014), 2033--2060.

\bibitem{OxlW22}
J.~Oxley and Z.~Walsh: \emph{2-modular matrices}, SIAM Journal on Discrete
  Mathematics \textbf{36} (2022), 1231--1248.

\bibitem{PaaSWX24}
J.~Paat, I.~Stallknecht, Z.~Walsh and L.~Xu: \emph{On the column number and
  forbidden submatrices for $\delta$-modular matrices}, SIAM Journal on
  Discrete Mathematics \textbf{38} (2024), 1--18.

\bibitem{PacTT22}
J.~Pach, G.~Tardos and G.~T\'oth: \emph{Crossings between non-homotopic edges},
  Journal of Combinatorial Theory, Series B \textbf{156} (2022), 389--404.

\bibitem{Prz15}
P.~Przytycki: \emph{Arcs intersecting at most once}, Geometric and Functional
  Analysis \textbf{25} (2015), 658--670.

\bibitem{Sch00}
P.~S. Schaller: \emph{Mapping class groups of hyperbolic surfaces and
  automorphism groups of graphs}, Compositio Mathematica \textbf{122} (2000),
  243--260.

\bibitem{She97}
V.~N. Shevchenko: Qualitative topics in integer linear programming,
  \emph{Translations of Mathematical Monographs}, volume 156, American
  Mathematical Society, 1997.

\bibitem{Sti12}
J.~Stillwell: Classical topology and combinatorial group theory, volume~72,
  Springer Science \& Business Media, 2012.

\end{thebibliography}
\end{document}